\numberwithin{equation}{section}
\def\Mod{\operatorname{mod}}
\def\intt{\operatorname{int}}
\def\Id{\operatorname{Id}}
\newtheorem{lemma}{Lemma}[section]
\newtheorem{theorem}{Theorem}
\newtheorem{cor}{Corollary}
\newtheorem{Remark}{Remark}[section]
\newtheorem{Prop}{Proposition}[section]
\newtheorem{Question}{Question}
\title[Julia set]{A new geometric characterization of the Julia set}
\subjclass[2000]{Primary 30B20, 30D05, 30D10; Secondary 34M05, 37F10.}
\author{Xiao Yao}
\address{Department of Mathematical Sciences,
Tsinghua University, Beijing, 100084, People's Republic of China}
\email{yaoxiao0710107@163.com}
\author{Daochun Sun}
\address{School of Mathematical Sciences,
China Normal University, Guangzhou, 510631, People's Republic of China}
\email{1457330943@qq.com}
\author{Zuxing Xuan}
\address{Beijing Key Laboratory of Information Service Engineering, Institute of Applied Sciences, Department of General Education, Beijing Union University, Beijing, 100101, People's Republic of China}
\email{zuxingxuan@163.com}
\begin{document}

\begin{abstract}
 This article concerns a new geometric
characterization of the Julia set. By using Ahlfors-Shimizu's characteristic, we establish some growth results which indicates the characterization of the Julia set.
The main technique is to estimate  the lower bound of $S(f^n,U)$, where $U$ is an open neighbourhood of some point in $\mathcal{J}(f)$.
\end{abstract}

\keywords{ rational function, entire function, meromorphic function, Julia set}
\maketitle

\section{Introduction}

Complex dynamic system concerns the iteration of a holomorphic map on the simply connected Riemann
surface $S$. By the uniformization theorem, $S$ must be
conformally equivalent to $\Delta:=\{z: |z|<1\}$, $\mathbb{C}$, or $\mathbb{\hat{C}}:=\mathbb{C}\cup\{\infty\}$.  In this paper, we only restrict ourselves to the case that $S$ is bi-holomorphic to $\mathbb{C}$, or $\hat{\mathbb{C}}.$ Given a point $z\in S$, we consider the sequence generated by its
iterates $\{f^n(z)\}_{n=1}^{+\infty}$ and study the possible behaviors as $n$ tends to
infinity. We divide $S$ into the Fatou set
$$\mathcal{F}(f)=\{z\in S:\{f^n\}_{n\in \mathbb{N}}\ \text{is a normal family in some neighbourhood of } z\}$$
and the Julia set $\mathcal{J}(f):= S \backslash \mathcal{F}(f)$. We will consider three cases:
\begin{enumerate}
\item
rational function: $f:\mathbb{\hat{C}}\longrightarrow \mathbb{\hat{C}}$;
\item transcendental entire function: $f:\mathbb{C}\longrightarrow \mathbb{C}$;
\item transcendental meromorphic function: $f:\mathbb{C}\longrightarrow \mathbb{\hat{C}}$.
\end{enumerate}
The iteration of rational functions was initiated by
Julia \cite{Julia-Rational} and Fatou \cite{Fatou-Rational}
 between 1918 and 1920, and the iteration of transcendental
 entire functions was first considered by
 Fatou \cite{Fatou-entire} in 1926. For the general
 theory of iteration mermorphic functions, we refer
 to Bergweiler's survey \cite{Bergweiler-Survey}.
 For the detailed discussions for dynamical  properties
 of Fatou and Julia sets, please see
 \cite{Beardon, Milnor-2006Book, Eremenko-Lyubich,  Eremenko-Lyubich-Survey,Sch}.

 The expansion property near the Julia set is common for rational maps, transcendental entire (mermorphic) maps.  To our best knowledge, there are two main classical definitions of the Julia set. One is based on the normal family argument, and the other is based on the fact that $\mathcal{J}(f)$ is the closure of the set of repelling periodic points.  Sun and Yang investigated  the dynamics of a more general class of maps \cite{SunYang2000}, which are called quasi-rational maps. They explored the idea of Ahlfors covering surface theory to give  new definitions of Fatou set and Julia set. This paper aims to give a new geometric characterization of the Julia set by using the spherical metric. Let $f$ be an analytic function on a domain $V\subset \mathbb{C}$.  For any subset $U$ of $V$, we define
$$S(f, U):=\frac{1}{\pi}\int\int_{U}\frac{|f'(z)|^2}
{(1+|f(z)|^2)^2}dx dy,\ \text{where}\ \ z=x+iy.$$
Throughout this paper, we use $D(z, r)=\{w: |w-z|< r\}$
for $z\in \mathbb{C}$. In the sequence, we  always
assume that a transcendental meromorphic function has
at least one pole.

 We begin with our characterization of the Julia set by the following.

\begin{theorem}\label{characterization-general}
 Let $f$ be a rational function on $\mathbb{\hat{C}}$ with degree $d\geq 2$, or a transcendental entire  (meromorphic) function $f$ on $\mathbb{C}$. Then
 $z\in \mathcal{J}(f)$ if and only
\[
\liminf\limits_{n\rightarrow +\infty}S(f^n, D(z,r))=+\infty
\]
  for any $r>0$.

 Especially, if $f(z)$ is a rational function  on $\hat{\mathbb{C}}$ with degree $d\geq 2$. We have
$z\in \mathcal{J}(f)$ if and only
\begin{equation}\label{Characterization-rational}
\lim\limits_{n\rightarrow +\infty}\frac{\log
S(f^n, \ D(z,r))}{n}=\log d
\end{equation}
 for any $r>0$.

\end{theorem}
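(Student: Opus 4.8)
The plan is to exploit the area interpretation of $S$: writing $g^{\#}:=|g'|/(1+|g|^2)$ for the spherical derivative, one has $\pi S(g,U)=\int\int_U (g^{\#})^2\,dx\,dy=\int_{\hat{\mathbb{C}}}n_g(\zeta,U)\,d\sigma(\zeta)$, where $n_g(\zeta,U)$ counts with multiplicity the solutions of $g(w)=\zeta$ in $U$ and $d\sigma$ is the spherical area element, of total mass $\pi$; so the whole argument reduces to bounding the covering functions $n_{f^n}(\cdot\,,D(z,r))$ below and above. For the ``only if'' halves of both assertions, suppose $z\in\mathcal{F}(f)$ and fix $r_0>0$ with $\overline{D(z,r_0)}\subset\mathcal{F}(f)$ and $\{f^n\}$ normal on $D(z,r_0)$. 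From any subsequence extract $f^{n_k}$ converging locally uniformly in the spherical metric; then $(f^{n_k})^{\#}$ converges uniformly on $\overline{D(z,r)}$ for $r<r_0$ (when the limit is the constant $\infty$, use $(f^{n_k})^{\#}=(1/f^{n_k})^{\#}\to 0$), hence is bounded there by some $M$, so $S(f^{n_k},D(z,r))\le r^2M^2$. Thus $\liminf_n S(f^n,D(z,r))<+\infty$ for small $r$, so the first criterion fails, and in the rational case $\liminf_n n^{-1}\log S(f^n,D(z,r))\le 0<\log d$, so \eqref{Characterization-rational} fails as well.

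For the ``if'' halves, suppose $z\in\mathcal{J}(f)$, fix $r>0$, and set $U=D(z,r)$. I would use the blow-up property in the sharp form: \emph{for every compact $K\subset\hat{\mathbb{C}}\setminus E(f)$ there is an $N$ with $f^n(U)\supseteq K$ for all $n\ge N$}, where $E(f)$ is the exceptional set ($\#E(f)\le 2$). For rational $f$ this is classical; for transcendental $f$ it is obtained from the classical fact $\bigcup_n f^n(U)\supseteq\hat{\mathbb{C}}\setminus E(f)$ by localizing at a repelling periodic point $p\in U\cap\mathcal{J}(f)$ of period $q$: choosing a small disc $V\ni p$ with $f^q(V)\supseteq V$ makes each chain $\{f^{qk+i}(V)\}_k$ increasing, and $E(f^q)=E(f)$, so compactness of $K$ gives the claim.

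In the rational case, write $n=N+k$; for a.e.\ $\zeta$ (off the critical values of $f^k$ and off $E(f)$) the $d^k$ distinct solutions of $f^k(\xi)=\zeta$ all avoid $E(f)$ by complete invariance, hence lie in $\hat{\mathbb{C}}\setminus E(f)\subseteq f^N(U)$, so each lifts through $f^N$ into $U$; therefore $n_{f^n}(\zeta,U)\ge d^k$ and $S(f^n,U)\ge d^{\,n-N}$. With the trivial upper bound $S(f^n,U)\le S(f^n,\hat{\mathbb{C}})=d^n$ (the degree of $f^n$), this forces $n^{-1}\log S(f^n,D(z,r))\to\log d$, and in particular $\liminf_n S(f^n,D(z,r))=+\infty$. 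In the transcendental case, apply the sharp blow-up to the compacta $K_R:=\overline{D(0,R)}\setminus\bigcup_{a\in E(f)\cap\mathbb{C}}D(a,1/R)$: for $n\ge N(K_R)$ one has $f^{n+1}(U)=f(f^n(U))\supseteq f(K_R)$, and counting preimages through $f^{n+1}=f\circ f^n$ (each $\xi\in K_R$ with $f(\xi)=\zeta$ lifts into $U$ via $f^n$) gives $S(f^{n+1},U)\ge S(f,K_R)$; hence $\liminf_n S(f^n,U)\ge S(f,K_R)$ for every $R$. As $R\to\infty$ the excised discs contribute $O(1/R^2)$, since $f^{\#}$ is continuous on $\mathbb{C}$, so $S(f,K_R)\to S(f,\mathbb{C})=\lim_{\rho\to\infty}S(f,D(0,\rho))$, and this limit is $+\infty$ --- otherwise the Ahlfors-Shimizu characteristic would grow only like $O(\log\rho)$, forcing $f$ to be rational. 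Therefore $\liminf_n S(f^n,D(z,r))=+\infty$ for every $r>0$.

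The delicate point is the transcendental Julia direction: one must first upgrade the classical Montel blow-up to the ``for all large $n$'' form (this is precisely what the local expansion at a repelling cycle provides), and then set up the bookkeeping so that the single extra iterate in $f^{n+1}=f\circ f^n$ harvests the \emph{unbounded} spherical area of the transcendental $f$ over the exhaustion $\{K_R\}$, while the at most two exceptional values are excised at no cost. The rational covering estimates and the normal-family argument on the Fatou set are then routine once the area reading of $S$ is in place.
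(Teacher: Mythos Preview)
Your overall strategy is sound and, on the Fatou side and in the transcendental Julia case, correct. Two remarks.

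\medskip

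\textbf{A gap in the rational case.} The sentence ``hence lie in $\hat{\mathbb{C}}\setminus E(f)\subseteq f^N(U)$'' is the problem: the sharp blow-up you prove only yields $K\subseteq f^N(U)$ for \emph{compact} $K\subset\hat{\mathbb{C}}\setminus E(f)$, and when $E(f)\neq\emptyset$ the open set $\hat{\mathbb{C}}\setminus E(f)$ is never contained in any $f^N(U)$ (take $f(z)=z^2$: a small disc about $1$ iterates to annuli $\{r_n<|z|<R_n\}$, never to all of $\mathbb{C}^*$). The fix is easy and in fact close to what the paper does: since each exceptional point is super-attracting, choose a forward-invariant neighbourhood $W\supset E(f)$ with $f(W)\subset W$, set $K=\hat{\mathbb{C}}\setminus W$, and note that \emph{all} $f^k$-preimages of any $\zeta\in K$ already lie in $K$. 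Then your counting argument gives $n_{f^{N+k}}(\zeta,U)\ge d^k$ for a.e.\ $\zeta\in K$, hence $S(f^{N+k},U)\ge d^k\,\sigma(K)/\pi$, which suffices. The paper instead excises small discs around $E(f)$, bounds $S(f^n,\cdot)$ there by the Fatou-side estimate, and subtracts from $S(f^n,\hat{\mathbb{C}})=d^n$.

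\medskip

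\textbf{A different route in the transcendental case.} Here your argument is correct as written and genuinely different from the paper's. The paper invokes the Ahlfors five-islands theorem to produce, for one of five Jordan domains $D_j$, infinitely many disjoint bounded univalent preimages $\{U_{k'}\}$; the blow-up then gives $S(f^{n+1},U)\ge l\cdot S(\mathrm{Id},D_j)$ for any $l$. Your route is more elementary: the single inequality $S(f^{n+1},U)\ge S(f,K_R)$ (which is exactly the paper's Proposition~\ref{Area-Relation}) together with $S(f,K_R)\to S(f,\mathbb{C})=+\infty$, the latter because a bounded $S(r,f)$ would force $T_0(r,f)=O(\log r)$ and hence $f$ rational. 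This avoids Ahlfors' covering-surface machinery entirely, at the cost of bringing in the Ahlfors--Shimizu characteristic; either way the ``extra iterate'' $f^{n+1}=f\circ f^n$ does the work, and the two proofs are really two ways of certifying that one application of a transcendental $f$ already produces unbounded spherical area.
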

It is important to choose a suitable metric on a neighborhood of the Julia set, especially for transcendental entire (meromorphic) function $f$ for which $\mathcal{J}(f)$ is not compact. The hyperbolic metric is widely used in complex dynamical system. We refer to \cite{Ahlfors-conformal-invariant, Beardon-Minda} for the detailed introduction for the hyperbolic metric and \cite{ Eremenko-Lyubich, Mcmullen-Renomalization, Stallard-hyperbolic, Rempe-Sixsmith} for the application of hyperbolic metric in dynamic systems.

 Here, we emphasize that the spherical metric is crucial to Theorem \ref{characterization-general}.  We remark that it seems that Euclidean metric is not suitable for the characterization, and we only mention that the existence of Baker wandering domains is the obstruction for the characterization. For any transcendental entire function $f$ with a multiply connected Fatou component $U$,  we can choose  $z_0\in U$,
$$\lim_{n\rightarrow +\infty}\mathcal{L}^2(f^n, D(z_0, r))=+\infty$$
for any disk $D(z_0,r)\subset U$, where $\mathcal{L}^2 (.)$
means the two dimensional Lebesgue measure.
For the details, see the remarks and discussions in
Proposition \ref{effective} in Section 3.
 Motivated by equation (\ref{Characterization-rational}) for the rational case, we ask the following question for transcendental case.
\begin{Question}
Let $f$ be a transcendental entire (mermorphic) function.
Then for  any   $z\in \mathcal{J}(f)$, is it true that
$$\lim\limits_{n\rightarrow +\infty}\frac{1}{n}\log S(f^n, \ D(z,r))=+\infty$$ for any $r>0$?
\end{Question}

  Suppose that  $f$ is a transcendental meromorphic function $f$ with at least two poles. The fact that the  pre-poles are dense in $\mathcal{J}(f)$ implies that $S(f^n, D(z,r))=+\infty$ for some $n$.
For other cases, things become somewhat subtle. One can not
obtain the uniform estimation due to the lake of the compactness
of the Julia set. We conjecture that the answer is affirmative.
 However, it seems  difficult to answer it completely. We will give some partial results.

\begin{theorem}\label{characterization-one}
Let $f$ be a transcendental entire function. Assume that
\begin{equation}\label{Growth-Condtion}
\liminf_{r\rightarrow+\infty}\frac{\log M(2r, f)}{\log M(r,f)}=d>2,
\end{equation}
then $z\in \mathcal{J}(f)$ if and only if
\[
\lim_{n\rightarrow+\infty}\frac{1}{n}\log S(f^n, D(z,r))=+\infty
\]
for any $r>0$, where $M(r,f)=\max\limits_{|z|=r}|f(z)|$.
\end{theorem}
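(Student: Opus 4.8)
\emph{Overview.} The equivalence splits into two implications, and only the implication ``$z\in\mathcal J(f)\Rightarrow$ growth'' uses \eqref{Growth-Condtion}. If $z\notin\mathcal J(f)$, then $\{f^n\}$ is normal on some disk $D(z,r_0)$, so by Marty's criterion the spherical derivatives $(f^n)^\#=|(f^n)'|(1+|f^n|^2)^{-1}$ are uniformly bounded, say by $C$, on $\overline{D(z,r_0/2)}$; hence $S(f^n,D(z,r_0/2))=\tfrac1\pi\iint_{D(z,r_0/2)}((f^n)^\#)^2\,dx\,dy\le C^2(r_0/2)^2$ is bounded in $n$ and $\tfrac1n\log S(f^n,D(z,r_0/2))\to0$. (Equivalently, this is immediate from Theorem~\ref{characterization-general}, since $\tfrac1n\log S(f^n,D(z,r))\to+\infty$ forces $S(f^n,D(z,r))\to+\infty$, hence $\liminf_n S(f^n,D(z,r))=+\infty$, hence $z\in\mathcal J(f)$.) So it remains to prove: if $z\in\mathcal J(f)$ and \eqref{Growth-Condtion} holds, then $\tfrac1n\log S(f^n,D(z,r))\to+\infty$ for every $r>0$.

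\emph{Reduction to a covering estimate.} Fix $r>0$, put $U=D(z,r)$, and let $\Sigma$ denote normalized spherical area ($\Sigma(\hat{\mathbb C})=1$, $\Sigma(\{|w|<R\})=R^2/(1+R^2)$). Since $\tfrac1\pi|g'|^2(1+|g|^2)^{-2}\,dx\,dy$ is the pullback by $g$ of the normalized spherical area form, the area formula applied to $f^n|_U$ gives
\[
S(f^n,U)=\int_{\hat{\mathbb C}}\#\{w\in U:f^n(w)=\zeta\}\,d\Sigma(\zeta)\ \ge\ m_n\,\Sigma(V_n)
\]
whenever $f^n|_U$ covers a set $V_n$ with multiplicity at least $m_n$. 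Because a set of large Euclidean modulus carries tiny spherical area, $V_n$ must be kept in a fixed bounded part of the plane, say the central annulus $A_0=\{1/2<|w|<2\}$ with $\Sigma(A_0)=3/5$ (translated to some $\{2^{-j-1}<|w|<2^{-j}\}$ if necessary to avoid the at most one exceptional value of $f$). Hence it suffices to find, for all large $n$, a number $m_n$ with $\tfrac1n\log m_n\to+\infty$ such that $f^n|_U$ covers $A_0$ with multiplicity $\ge m_n$.

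\emph{The iterative scheme.} Fix $\eps>0$ with $d-\eps>2$ and a small $\delta>0$. Since $z\in\mathcal J(f)$ and $\mathcal J(f)$ is unbounded and completely invariant, the blow-up property (Montel's theorem) yields, for arbitrarily large $T$, an index $N_0$ with $f^{N_0}(U)\supseteq\overline{\{T/2<|w|<2T\}}$. As the Wiman--Valiron disk $\Delta_1=D(z_{\rho_1},\rho_1\nu(\rho_1)^{-1/2-\delta})$ at an admissible radius $\rho_1\in(T,\tfrac32T)$ (outside the Wiman--Valiron exceptional set of finite logarithmic measure; here $z_{\rho_1}$ is a point at which $|f|$ attains $M(\rho_1,f)$, and $\nu$ is the central index) has radius $\ll\rho_1$, it sits inside that thick annulus, so $f^{N_0}(U)\supseteq\Delta_1$. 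The inductive step rests on the Wiman--Valiron approximation $f(z)=f(z_\rho)(z/z_\rho)^{\nu(\rho)}(1+o(1))$ on $D(z_\rho,\rho\nu(\rho)^{-1/2-\delta})$ for admissible $\rho$: the power map $z\mapsto(z/z_\rho)^{\nu(\rho)}$ wraps this disk around the annulus $\{e^{-\nu(\rho)^{1/2-\delta}/2}<|u|<e^{\nu(\rho)^{1/2-\delta}/2}\}$ roughly $\nu(\rho)^{1/2-\delta}/\pi$ times, so $f(\Delta_k)$ covers the round annulus $B_{k+1}=\{M(\rho_k,f)e^{-\nu(\rho_k)^{1/2-\delta}/2}<|w|<M(\rho_k,f)e^{\nu(\rho_k)^{1/2-\delta}/2}\}$ with multiplicity $\ge c\,\nu(\rho_k)^{1/2-\delta}$; as $B_{k+1}$ has modulus ratio $e^{\nu(\rho_k)^{1/2-\delta}}\to+\infty$ it contains a further Wiman--Valiron disk $\Delta_{k+1}$ at an admissible radius $\rho_{k+1}$ comparable to $M(\rho_k,f)$; and one more application of $f$ folds $B_{k+1}$ back onto $A_0$. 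Setting $N_{k+1}=N_k+1$ and $m_{k+1}=c\,\nu(\rho_k)^{1/2-\delta}m_k$ (with $m_1=1$), the composition-of-coverings inequality gives that $f^{N_k+1}|_U$ covers $\Delta_{k+1}$ with multiplicity $\ge m_{k+1}$ and $f^{N_k+2}|_U$ covers $A_0$ with multiplicity $\ge m_{k+1}$. Since $\rho_k\to+\infty$ forces $\nu(\rho_k)\to+\infty$, telescoping gives $\log m_k=\sum_{j<k}\log\!\big(c\,\nu(\rho_j)^{1/2-\delta}\big)$ with $\tfrac1k\log m_k\to+\infty$; as $N_k=N_0+k-1$, taking $n=N_0+k+1$ yields $S(f^n,U)\ge\tfrac35\,m_{k+1}$ and therefore $\tfrac1n\log S(f^n,U)\to+\infty$.

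\emph{The main obstacle} is the \emph{folding-back} step: one must show that the $f$-image of a far-out round annulus $B_{k+1}$ again covers the fixed central annulus $A_0$, \emph{uniformly over all of $A_0$} and with multiplicity at least a positive constant (so that the accumulated factor $m_{k+1}$ survives). The qualitative part is easy --- for an entire function the minimum modulus on every circle is $\le|f(0)|$ when $f$ is zero-free and vanishes when $f$ has a zero, so $f(B_{k+1})$ does reach down to bounded modulus --- but the \emph{uniform} multiplicity bound needs a quantitative value-distribution estimate, essentially that $f$ takes every value of $A_0$ at least $\asymp\log M(\rho,f)$ times inside an annulus $\{\rho<|z|<e^{o(\log M(\rho,f))}\rho\}$. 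This is exactly where \eqref{Growth-Condtion} with $d>2$ is used: the super-linear growth of $\log M(\rho,f)$ it forces makes $T(2\rho,f)-T(\rho,f)$ dominate the (non-uniform) proximity terms $m(\rho,\zeta)$ for $\zeta$ in the compact set $A_0$, which converts the first-main-theorem bookkeeping into the required uniform lower bound for $n(e^{o(\log M(\rho,f))}\rho,\zeta)-n(\rho,\zeta)$. Verifying that the successive admissible radii and exceptional sets can all be avoided, and that the Wiman--Valiron error term $o(1)$ is controlled uniformly along the scheme, are then routine.
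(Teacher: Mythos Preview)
Your treatment of the easy direction (Marty's criterion on a Fatou disk) is correct and is exactly what the paper does. For the hard direction, however, your approach is \emph{genuinely different} from the paper's, and it contains a real gap at precisely the point you yourself flag as ``the main obstacle.''

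The paper never runs an iterative Wiman--Valiron multiplicity scheme. Instead it fixes one large annulus $A=\{r/2^{k+1}<|w|<2^{k-1}r\}$ and estimates $S(f^n,A)$ \emph{directly} through the Ahlfors--Shimizu characteristic: monotonicity of $S(t,f^n)$ and $|T_0-T|\le C$ give
\[
k\log 2\cdot S(f^n,A)\ \ge\ T(2^{k-1}r,f^n)-2\,T(\tfrac{r}{2},f^n)-O(1).
\]
The first term is bounded below using an Eremenko point $w$ with $|w|\in(\tfrac{11}{10}r,\tfrac65 r)$ and $|f^n(w)|\ge M^n(\tfrac{11}{10}r,f)$ (Lemma~\ref{Eremenko-Point}); the second is bounded above by $\log M^n(\tfrac r2,f)$. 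The hypothesis $d>2$ enters in one clean stroke: it lets one absorb the factor $2$ in front of $T(\tfrac r2,f^n)$, since iterating $\log M(2t,f)>d\log M(t,f)$ and Lemma~\ref{compareInequ} gives $\big(M^n(\tfrac r2,f)\big)^{2d_1}<M^n(\tfrac{1+\eps}{2}r,f)$ for any $2d_1<d$. Thus $S(f^n,A)\gtrsim\log\big(M^n(\tfrac{11}{10}r,f)/M^n(\tfrac{1+\eps}{2}r,f)\big)$, which Lemma~4.2 shows grows super-exponentially. One application of the blow-up property and Proposition~\ref{Area-Relation} finishes.

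Your scheme, by contrast, accumulates multiplicity over Wiman--Valiron disks $\Delta_k$ that escape to $\infty$, where spherical area is negligible; everything rides on the ``folding-back'' claim $f(B_{k+1})\supseteq A_0$. The justification you sketch --- that $T(2\rho,f)-T(\rho,f)$ dominates $m(\rho,\zeta)$ uniformly for $\zeta\in A_0$ --- does not follow from $d>2$: the first main theorem only gives $m(\rho,\zeta)\le T(\rho,f)+O(1)$, and nothing prevents $m(R_k,\zeta_k)$ from being comparable to $T(R_k,f)$ for some $\zeta_k\in A_0$ along your specific radii. Moreover, even a good bound on $N(R_2,\zeta)-N(R_1,\zeta)$ would not place a $\zeta$-preimage \emph{inside} $B_{k+1}$, whose logarithmic width $\nu(\rho_k)^{1/2-\delta}$ is tiny compared with $\log M(\rho_k,f)$. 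So as written the folding-back step is unproved, and the claim that the remaining verifications are ``routine'' is premature. The paper's route avoids this difficulty entirely: by working with the characteristic rather than with coverings, the only comparison needed is $\tfrac1{d_1}\log M^n(\tfrac{11}{10}r,f)-2\log M^n(\tfrac r2,f)$, where the role of $d>2$ is transparent.
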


In fact, we can weaken the growth condition \eqref{Growth-Condtion} in Theorem \ref{characterization-one} to  get the following.

\begin{cor}\label{characterization-k}
Let $f$ be a transcendental entire function. Assume that
\begin{equation}\label{Growth-Condtion-k}
\liminf_{r\rightarrow+\infty}\frac{\log M(2r, f^k)}
{\log M(r,f^k)}=d_k>2
\end{equation}
for some $k\in \mathbb{N}$,
then $z\in \mathcal{J}(f)$ if and only if
\[
\lim_{n\rightarrow+\infty}\frac{\log S(f^n, D(z,r))}{n}=+\infty
\]
for any $r>0$.

\end{cor}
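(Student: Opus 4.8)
The plan is to reduce Corollary~\ref{characterization-k} to Theorem~\ref{characterization-one} applied to the iterate $g:=f^{k}$, and then to bridge the gap between the arithmetic progression $\{km\}_{m\ge 1}$ and a general exponent $n$ by an elementary spreading argument. One implication needs no growth hypothesis at all: if $\lim_{n\to\infty}\frac1n\log S(f^{n},D(z,r))=+\infty$ for every $r>0$, then for each $r>0$ and each $C>0$ we have $S(f^{n},D(z,r))>e^{Cn}$ for all large $n$, whence $\liminf_{n\to\infty}S(f^{n},D(z,r))=+\infty$, and Theorem~\ref{characterization-general} forces $z\in\mathcal{J}(f)$.

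For the converse, assume $z\in\mathcal{J}(f)$. Since $f$ is transcendental entire so is $g:=f^{k}$, and \eqref{Growth-Condtion-k} is exactly the statement that $g$ satisfies \eqref{Growth-Condtion} with constant $d_{k}>2$; thus Theorem~\ref{characterization-one} applies to $g$. Because the Julia set of a transcendental entire function is completely invariant and $\mathcal{J}(f^{k})=\mathcal{J}(f)$, the points $f^{j}(z)$ with $0\le j\le k-1$ all lie in $\mathcal{J}(g)$, so Theorem~\ref{characterization-one} yields
\[
\lim_{m\to\infty}\frac1m\log S\bigl(g^{m},D(f^{j}(z),\rho)\bigr)=+\infty\qquad\text{for all }\rho>0,\ 0\le j\le k-1.
\]
Now fix $r>0$. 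Each $f^{j}$ is an open map, so $f^{j}(D(z,r))$ is open and contains $f^{j}(z)$; pick $\delta\in(0,r]$ with $D(f^{j}(z),\delta)\subset f^{j}(D(z,r))$ for every $j\in\{0,\dots,k-1\}$. Writing $n=km+j$ with $0\le j\le k-1$ and using $f^{n}=g^{m}\circ f^{j}$, the change-of-variables (area) formula for holomorphic maps gives
\[
S(f^{n},D(z,r))=\frac1\pi\int\!\!\int_{\mathbb{C}}n\bigl(f^{j},D(z,r),u\bigr)\frac{|(g^{m})'(u)|^{2}}{(1+|g^{m}(u)|^{2})^{2}}\,dx\,dy\ \ge\ S\bigl(g^{m},D(f^{j}(z),\delta)\bigr),
\]
since the covering number $n(f^{j},D(z,r),u)$ is at least $1$ on $D(f^{j}(z),\delta)$ and the integrand is nonnegative. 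As $n=km+j\le k(m+1)$ and $\log S(g^{m},D(f^{j}(z),\delta))>0$ for large $m$, this gives
\[
\frac{\log S(f^{n},D(z,r))}{n}\ \ge\ \frac1k\cdot\frac{\log S\bigl(g^{m},D(f^{j}(z),\delta)\bigr)}{m+1}\ \longrightarrow\ +\infty
\]
along each residue class $j$; since there are only finitely many residues, $\frac1n\log S(f^{n},D(z,r))\to+\infty$, and $r>0$ was arbitrary.

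The main obstacle is precisely this last interpolation step: Theorem~\ref{characterization-one} controls $S(f^{km},\,\cdot\,)$ only along multiples of $k$, so the growth must be transported to the intervening iterates. The three ingredients that make this work — openness of $f^{j}$ to manufacture a genuine round disk inside $f^{j}(D(z,r))$, complete invariance of the Julia set together with $\mathcal{J}(f^{k})=\mathcal{J}(f)$ to keep the center of that disk in $\mathcal{J}(g)$, and monotonicity of $S(\,\cdot\,,\,\cdot\,)$ in the domain via the covering-number identity — should each be verified carefully, as should the possibility of choosing $\delta$ uniformly over the finitely many $j$ and the fact that $\log S(g^{m},D(f^{j}(z),\delta))$ is eventually positive so that the inequality with $n\le k(m+1)$ is legitimate.
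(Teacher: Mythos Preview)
Your proposal is correct and follows essentially the same route as the paper: apply Theorem~\ref{characterization-one} to $g=f^{k}$, use $\mathcal{J}(f^{k})=\mathcal{J}(f)$ and complete invariance to place each $f^{j}(z)$ in $\mathcal{J}(g)$, choose a round disk inside $f^{j}(D(z,r))$ by openness, and pass from $S(f^{km+j},D(z,r))$ to $S(g^{m},D(f^{j}(z),\delta))$ via the area monotonicity of Proposition~\ref{Area-Relation}. Your write-up is in fact more careful than the paper's, which contains a reversed containment and a stray ``$\supset$'' where ``$\geq$'' is meant; your explicit covering-number justification and attention to the uniform choice of $\delta$ and eventual positivity of $\log S$ are welcome refinements of the same argument.
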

We remark that the exponential family and cosin family  correspond to the case $k=2$ in Corollary \ref{characterization-k}. The condition \eqref{Growth-Condtion} can be weakened if the exceptional set $E(f)=\{z\in \hat{\mathbb{C}}: \sharp(\mathcal{O}^{-}(z))<\infty\}$ is non-empty, where $\mathcal{O}^{-}(z)=\{f^{-n}(z)\}_{n=0}^{\infty}$.  Due to Picard's theorem, we know that
$\sharp{(E(f)\backslash\{\infty\})}\leq 1$ for any transcendental entire function.
\begin{theorem}\label{characterization-E(f)-empty}
Let $f$ be a transcendental entire function $f$. Assume that $E(f)=\emptyset$ and
\begin{equation}\label{Growth-Condtion-2}
\liminf_{r\rightarrow+\infty}\frac{\log M(2r, f)}{\log M(r,f)}=d>1,
\end{equation}
then $z\in \mathcal{J}(f)$ if and only if
\[
\lim_{n\rightarrow+\infty}\frac{1}{n}\log S(f^n, D(z,r))=+\infty
\]
for any $r>0$.
\end{theorem}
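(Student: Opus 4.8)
The plan is to run the same machinery as in Theorem \ref{characterization-one}, but to exploit the emptiness of the exceptional set to replace the role of the growth factor "$d>2$" by the weaker "$d>1$". First recall the structure of the argument for the sufficiency direction (the "only if" part): if $z\in\mathcal{J}(f)$, one wants to show $\frac{1}{n}\log S(f^n,D(z,r))\to+\infty$. The standard reduction is that by the blowing-up property of the Julia set, for any neighbourhood $U=D(z,r)$ of $z\in\mathcal{J}(f)$ there is an $N$ with $\bigcup_{n\ge N}f^n(U)\supset\hat{\mathbb{C}}\setminus E(f)$; since $E(f)=\emptyset$ here, some iterate $f^{N}(U)$ already covers a fixed spherical disc of definite radius, in fact eventually covers $\hat{\mathbb{C}}$ minus an arbitrarily small set. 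So it suffices to estimate from below $S(f^m, V)$ where $V$ is a round disc, and iterate. The core quantitative input, which should be isolated as a lemma just as $S(f^n,U)$ estimates are isolated in the paper, is: if $V$ is a disc on which $f^j$ is defined and $f^j(V)$ contains the disc $D(0,R)$, then $S(f^{j},V)\ge c\,\frac{R^2}{1+R^2}$, i.e. the spherical area of the image counted with multiplicity is at least the spherical area of the covered disc. Then the growth condition \eqref{Growth-Condtion-2} controls how fast $R$ — the radius of the filled-in image disc — grows under further iteration: $\log M(r,f^{n})$ grows at least like $d^{\,n}$ times $\log M(r_0,f)$ once $d>1$, because \eqref{Growth-Condtion-2} gives $\log M(2r,f)\ge (d-\eps)\log M(r,f)$ for all large $r$, which iterates to $\log M(2^n r,f)\ge (d-\eps)^n\log M(r,f)$, and composing with $f^{n}$ amplifies this to doubly-exponential-in-$n$ size for the relevant radii. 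Feeding $R=R_n$ with $\log R_n\gtrsim (d-\eps)^{n}$ into the lemma gives $\log S(f^n,D(z,r))\gtrsim (d-\eps)^n$, whence $\frac1n\log S(f^n,D(z,r))\to+\infty$.

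For the necessity direction (the "if" part, i.e. $z\in\mathcal{F}(f)$ implies the limit is not $+\infty$, indeed is $0$), the argument is metric-geometric and does not use the growth condition at all: if $z\in\mathcal{F}(f)$, then on a small disc $D(z,r)$ contained in a Fatou component the family $\{f^n\}$ is normal with respect to the spherical metric, so the spherical derivatives $\frac{|(f^n)'|}{1+|f^n|^2}$ are locally uniformly bounded, say by $C$ on $D(z,r/2)$; then $S(f^n,D(z,r/2))\le \frac{1}{\pi}\cdot C^2\cdot \pi (r/2)^2=C^2 r^2/4$ is bounded, so $\frac1n\log S(f^n,D(z,r/2))\to 0$, and a shrinking-disc/covering argument upgrades this to every $r$. (One has to be slightly careful in the case $z$ lies in a multiply connected wandering domain, where $f^n\to\infty$ and the Euclidean area blows up — but this is precisely where the \emph{spherical} normalization saves us, cf. Proposition \ref{effective}.)

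The step I expect to be the main obstacle is getting the quantitative transition from the hypothesis \eqref{Growth-Condtion-2} with $d>1$ (rather than $d>2$) to a usable lower bound on $S(f^n,D(z,r))$. In Theorem \ref{characterization-one} the exponent $d>2$ is what guarantees that the image discs grow fast enough to dominate the error terms coming from the parts of $f^n(V)$ that fail to be univalent or that wrap around near $\infty$; lowering to $d>1$ forces one to recover that slack from the hypothesis $E(f)=\emptyset$. Concretely, when $E(f)=\emptyset$ the blowing-up property is \emph{complete} — $f^n(U)$ eventually omits nothing — so one may choose the intermediate disc $V$ with $f^{N}(U)\supset V$ to have \emph{any} prescribed large radius $R_0$ at the cost of a fixed $N=N(z,r,R_0)$, and then one only needs $\log M(2^n R_0, f^{n})\to\infty$ faster than linearly in $n$, which $d>1$ already provides. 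The delicate point is making the choice of $R_0$ uniform enough, and handling the poles/zeros of $f^n$ inside $V$ so that the spherical-area lower bound is not destroyed; I would handle this by a standard argument à la Ahlfors covering theory (the same circle of ideas attributed to \cite{SunYang2000} in the introduction): the number of "islands" of $f^{n}$ over a fixed disc in the image grows, and each contributes a definite amount of spherical area, which is exactly the lower bound on $S(f^n,\cdot)$ that the abstract.
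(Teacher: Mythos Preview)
Your proposal has a genuine gap in the core quantitative step. The ``lemma'' you isolate --- if $f^{j}(V)\supset D(0,R)$ then $S(f^{j},V)\ge c\,\frac{R^{2}}{1+R^{2}}$ --- is correct, but the right-hand side is bounded above by $c$ (the spherical area of $\hat{\mathbb{C}}$ is $1$ in this normalisation). So feeding in $R=R_n$ with $\log R_n\gtrsim (d-\eps)^{n}$ does \emph{not} yield $\log S(f^{n},D(z,r))\gtrsim (d-\eps)^{n}$; it yields only $S(f^{n},D(z,r))\ge c-o(1)$. Covering a large disc once contributes at most one unit of spherical area, regardless of how large the disc is. To make $S(f^{n},\cdot)$ tend to infinity you must exploit \emph{multiplicity of covering}, not the Euclidean size of the image. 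Your closing remark about counting islands over a fixed disc gestures at the right idea, but it is not what your main argument actually does.

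The paper's proof proceeds quite differently. It works directly with the Ahlfors--Shimizu characteristic: since $S(t,f^{n})$ is increasing in $t$, one has
\[
(k+1)\log 2\cdot S(2^{k}r,f^{n})\ \ge\ \int_{r/2}^{2^{k}r}\frac{S(t,f^{n})}{t}\,dt\ =\ T_{0}(2^{k}r,f^{n})-T_{0}(r/2,f^{n}),
\]
and then Lemmas \ref{lemma-Nevanlinna-Ahlfors-Shimizu} and \ref{Modulus-Nevanlinna-Characteristic} convert this into
\[
(k+1)\log 2\cdot S(2^{k}r,f^{n})\ \ge\ \tfrac{1}{d_{1}}\log^{+}M(\tfrac{6}{5}r,f^{n})-\log^{+}M(\tfrac{r}{2},f^{n})-\log 2,
\]
where $1<d_{1}<d$ and $k$ is chosen so that $\frac{2^{k}+6/5}{2^{k}-6/5}<d_{1}$. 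The Eremenko-point lemma (Lemma \ref{Eremenko-Point}) then gives $\log^{+}M(\tfrac{6}{5}r,f^{n})\ge \log M^{n}(\tfrac{11}{10}r,f)$, and the growth hypothesis \eqref{Growth-Condtion-2} together with Lemma \ref{compareInequ} gives $(M^{n}(r/2,f))^{d_{1}}<M^{n}((1+\eps)r/2,f)$ for large $n$. Thus the difference on the right is essentially $\log\bigl(M^{n}(\tfrac{11}{10}r,f)/M^{n}(\tfrac{1+\eps}{2}r,f)\bigr)$, which the paper's Lemma 4.2 shows grows faster than exponentially in $n$. Finally --- and this is where $E(f)=\emptyset$ enters --- the blowing-up property furnishes $L$ with $D(0,2^{k}r)\subset f^{L}(D(z,s))$, so Proposition \ref{Area-Relation} transfers the estimate to $S(f^{n+L},D(z,s))$.

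The contrast with Theorem \ref{characterization-one} is precisely here: when $E(f)$ may be nonempty one can only guarantee that iterates cover an \emph{annulus} $D(0,r/2^{k+1},2^{k-1}r)$, and the analogous estimate then carries a subtraction of $2T(r/2,f^{n})$ rather than $T(r/2,f^{n})$, which is why $d>2$ is needed there but $d>1$ suffices here. Your diagnosis that $E(f)=\emptyset$ lets one ``choose $V$ of any prescribed large radius $R_{0}$'' is not the operative mechanism; what matters is that one can cover a full disc centred at the origin, eliminating the inner-boundary term in the Ahlfors--Shimizu integral.
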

This theorem can also lead to a corollary which is very similar to Corollary \ref{characterization-k}, and we omit its statement here. One can use growth condition \eqref{Growth-Condtion} or
\eqref{Growth-Condtion-2}
to  rule out the existence of multiply connected Fatou components by the following result by Zheng.

\noindent {\bf Theorem A.}(\cite{Zheng2006})
Let $f$ be a transcendental entire function. Assume that
\begin{equation}\label{Zheng-Condition}
\liminf_{r\rightarrow+\infty}\frac{\log M(2r, f)}{\log M(r,f)}>1,
\end{equation} then
$\mathcal{F}(f)$ has no multiply connected Fatou component.

\begin{Remark}
 The original proof of Theorem A makes use of Nevanlinna theory and a covering lemma \cite{Zheng2006}, which also works for transcendental meromorphic functions with finitely many poles. The idea in the proof of Theorem \ref{characterization-one} can give an intuitional explanation of  Theorem A under the growth condition \eqref{Growth-Condtion}, which is stronger than the condition \eqref{Zheng-Condition}.  The existence of multiply connected Fatou components implies that one can choose a large annuli $U_{r}=\{z: r< |z|< 2^k r\}$ for some large $r\in \mathbb{R}$ and $k\in \mathbb{N}$ in some multiply connected Fatou component. Then $\sup\limits_{n}S(f^n, U_{r})=+\infty$ follows from the similar technique as in the proof of Theorem \ref{characterization-one}. Since $U_{r}$ lies in a bounded Fatou component, we have $\sup\limits_{n}S(f^n, U_{r})<+\infty$ by Theorem \ref{characterization-general}, which is a contradiction.
\end{Remark}
\begin{Remark}
By Theorem A, if a transcendental entire function $f$
satisfies the growth condition $\eqref{Growth-Condtion-k}$ in
Corollary $\ref{characterization-k}$, then
$\mathcal{J}(f)$ is connected. To our best knowledge,
we do not realize that there exists a
transcendental entire function $f$ such
that $\mathcal{J}(f)$ is connected and $f$ satisfies
the counterpart of growth
condition \eqref{Growth-Condtion-k},
\[
\liminf_{r\rightarrow+\infty}
\frac{\log M(2r, f^k)}{\log M(r,f^k)}=d_k\in [1,2]
\]
for each $k$. This is related to an equivalent form of Theorem A, which can be re-stated as the follows.

\noindent {\bf Theorem B.}
Let $f$ be a transcendental entire function. If $\mathcal{J}(f)$
 is disconnected, then
\[
\liminf_{r\rightarrow+\infty}\frac{\log M(2r, f^k)}
{\log M(r,f^k)}=1
\]
for each $k$.

Motivated by Theorem \ref{characterization-one}, Corollary
\ref{characterization-k} and Theorem B, we post the following
questions.

\begin{Question}[Strong version]\label{Question-Strong-Version}
Does there exist a transcendental entire function $f$ such that
$\mathcal{J}(f)$ is connected and
\[\liminf_{r\rightarrow+\infty}\frac{\log M(2r, f^k)}
{\log M(r,f^k)}=1\]
for each $k\in \mathbb{N}$?
\end{Question}

\begin{Question}[Weak version]\label{Question-Weak-version}
Does there exist a transcendental entire function $f$ such that
$\mathcal{J}(f)$ is connected and
\[\liminf_{r\rightarrow+\infty}\frac{\log M(2r, f^k)}
{\log M(r,f^k)}=d_k\in [1,2]\]
for each $k\in \mathbb{N}$?
\end{Question}

 Question \ref{Question-Strong-Version} and Question
\ref{Question-Weak-version} are asked for the general transcendental
entire function $f$. In fact, we could further restrict $f$ into  some special
class of functions  to consider the  questions above. One can still replace $\log M(r,f)$ by the Nevanlinna's
characteristic $T(r,f)$ or Ahlfors-Shimizu's characteristic $T_{0}(r,f)$ to discuss the similar questions. For the definitions of these two
characteristics, we refer to Section \ref{Preliminaries}.
\end{Remark}
Up to now,  we investigate the case that $f$ has only simply connected Fatou components. A nature question is whether we can get the similar results when $f$ has some multiply connected Fatou component $U$?  We will give a partial answer under the condition $e(U)=2$, where $e(U)$ denotes the eventual connectivity of $U$. Kisaka and Shishikura \cite{Kisaka-Shishikura} proved that $e(U)$ can only be $2$ or $\infty$ for any multiply connected Fatou component of a transcendental entire function.  In the sequence, we will consider the case that $f$ has a multiply connected Fatou component. We confine the case that the eventual connectivity is $2$ and  get the following.

\begin{theorem}\label{Multiply}
Let $f$ be a transcendental entire function. Assume  that   $\mathcal{F}(f)$ has a multiply Fatou component with eventual connectivity $2$. Then $z\in \mathcal{J}(f)$ if and only if
\[
\lim_{n\rightarrow+\infty}\frac{1}{n}\log S(f^n, D(z,r))=+\infty
\]
for any $r>0.$
\end{theorem}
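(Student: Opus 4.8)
The plan is to follow the same dichotomy that underlies Theorem \ref{characterization-general}: the ``only if'' direction should hold in full generality (it is a consequence of expansion on the Julia set and does not use the hypothesis on multiply connected components), so the substance is the ``if'' direction, which we prove contrapositively. That is, I would assume $z\in\mathcal{F}(f)$ and show that $\frac{1}{n}\log S(f^n,D(z,r))$ fails to tend to $+\infty$ for some $r>0$; in fact I expect to show that $\limsup_n\frac1n\log S(f^n,D(z,r))<+\infty$, or even better that the whole sequence $S(f^n,D(z,r))$ stays bounded when the orbit stays in a bounded set of Fatou components. The Fatou set of a transcendental entire function has only finitely many cycles of periodic components that are not wandering, and by Baker's theorem multiply connected components are wandering and their orbits escape to $\infty$. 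So the only case that is not already covered by the argument behind Theorem \ref{characterization-general} is the case where $z$ lies in (the orbit of) a multiply connected wandering domain $U$ whose eventual connectivity is $2$.

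For that case, the key structural input is the Kisaka--Shishikura description \cite{Kisaka-Shishikura}: since $e(U)=2$, for all large $n$ the component $U_n\supset f^n(U)$ is a round-annulus-like region, $f:U_n\to U_{n+1}$ is a proper map, and — crucially — it is eventually a \emph{degree-one} covering onto its image (this is exactly what distinguishes eventual connectivity $2$ from $\infty$, where the degrees blow up). I would first pass to such a large iterate so that from then on $f^n$ restricted to a fixed small disk $D(z,r)\subset U$ maps univalently, or with uniformly bounded valence, onto a nested sequence of doubly connected regions escaping to infinity. Then I estimate $S(f^n,D(z,r))$ directly: it is the spherical area (with multiplicity) of the image $f^n(D(z,r))$, and this image sits inside $U_{n}$ which, being essentially an annulus around a large circle, has spherical area tending to $0$. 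So $S(f^n,D(z,r))\to 0$, and certainly $\frac1n\log S(f^n,D(z,r))\not\to+\infty$. Combining with the already-known cases (orbit bounded, or orbit eventually in a simply connected escaping/parabolic/Siegel component — all handled by Theorem \ref{characterization-general} or its proof) finishes the Fatou side.

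The main obstacle, I expect, is controlling the valence/degree of $f^n$ on $D(z,r)$ uniformly in $n$. A priori $f^n|_{D(z,r)}$ could wrap many times, and even in the $e(U)=2$ regime the degree of $f:U_n\to U_{n+1}$ is $1$ only \emph{eventually}; before that finitely many iterates may have higher degree, and one must check these contribute only a fixed multiplicative constant to $S(f^n,\cdot)$, harmless after dividing by $n$. The cleanest route is: (i) use Kisaka--Shishikura to fix $N$ with $\deg(f:U_n\to U_{n+1})=1$ for $n\ge N$; (ii) for the initial segment bound $S(f^N,D(z,r))$ by a constant $C$ using continuity/compactness, since $\overline{D(z,r)}$ is a compact subset of $U$; (iii) for $n>N$, write $f^n=f^{\,n-N}\circ f^N$ and note $f^{\,n-N}$ is injective on the relevant nested annuli, so $S(f^n,D(z,r))$ is bounded by the spherical area of the annular region $U_n$, which $\to 0$ because $U_n$ surrounds $\infty$ at larger and larger modulus but with bounded modulus itself. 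Granting Theorem \ref{characterization-general} for the remaining Fatou possibilities, this yields the desired contradiction and completes the proof.
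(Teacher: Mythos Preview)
You have the two directions reversed in difficulty. The direction you spend the whole proposal on --- the contrapositive ``$z\in\mathcal{F}(f)\Rightarrow \tfrac{1}{n}\log S(f^n,D(z,r))\not\to+\infty$ for some $r$'' --- follows immediately from Marty's criterion exactly as in Proposition~\ref{Characterization-Fatou-Set}, with no case analysis and no input about multiply connected components. The substantive direction is the other one: showing that $z\in\mathcal{J}(f)$ forces $\tfrac{1}{n}\log S(f^n,D(z,r))\to+\infty$. This is \emph{not} known in general --- it is precisely the open Question~1 --- so your claim that it ``should hold in full generality'' as a consequence of expansion is unfounded, and it is exactly here that the hypothesis $e(U)=2$ is consumed.

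Your structural claim about the Kisaka--Shishikura picture is also incorrect and would block any attempt to redirect the argument. When $e(U)=2$ the maps $f:U_n\to U_{n+1}$ are eventually \emph{unbranched} (by \cite{BRS} there are no critical points in $U_n$ for large $n$; this is what Riemann--Hurwitz forces between two annuli), but they are \emph{not} eventually of degree one: the degrees $d_n$ tend to infinity by the argument principle, since a core curve $\gamma_n\subset U_n$ winds around ever larger disks and $f$ is transcendental. The paper's proof uses precisely this growth for the hard direction: take a core curve $\gamma_{n_0}$, note that $f:\intt\gamma_n\to\intt\gamma_{n+1}$ is a proper map of degree $d_n$, invoke Lemma~\ref{Multiply-Exceptional} to get $E(f)=\emptyset$, use the blow-up property on $\mathcal{J}(f)$ to obtain $\intt\gamma_{n_0}\subset f^{L}(D(z,r))$, and conclude via Proposition~\ref{Area-Relation} that $S(f^{n+L},D(z,r))\ge d_{n_0}d_{n_0+1}\cdots d_{n_0+n-1}\cdot S(\Id,\intt\gamma_{n_0+n})$, which grows super-exponentially because $d_n\to\infty$.
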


We believe that the growth condition \eqref{Growth-Condtion} can be completely removed. Here, we prove a theorem without any growth condition, while we need to restrict ourselves in hyperbolic transcendental entire functions in class
$$ \mathcal{B}=\{f: f \ \text{is a transcendental entire function}, S(f) \ \text{is bounded}\}, $$
 where $S(f)$ is the smallest closed set such that
 $f: \mathbb{C}\backslash f^{-1}S(f)\rightarrow \mathbb{C}
 \backslash S(f)$ is a covering map. Let $f$ be a transcendental entire function in class $\mathcal{B}$, $f$ is said to be hyperbolic if $\mathcal{F}(f)$ consists of finitely many attracting basins. We refer to the recent article \cite{Rempe-Sixsmith} and the references therein.

\begin{theorem}\label{characterization-hyperbolic}
Let $f$ be a hyperbolic transcendental entire function in class $\mathcal{B}$. Assume that  $E(f)\subset \mathcal{F}(f)$, then
 $z\in \mathcal{J}(f)$ if and only if
\[
\lim_{n\rightarrow+\infty}\frac{1}{n}\log S(f^n, D(z,r))=+\infty
\]
for any $r>0.$
\end{theorem}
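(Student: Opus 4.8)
The plan is to treat the two directions separately, and to reduce both of them to the machinery already available in the rational case (Theorem \ref{characterization-general}) plus the key lower-bound estimate for $S(f^n,U)$ advertised in the abstract. For the easy direction, suppose $z\in\mathcal{F}(f)$. Since $f$ is hyperbolic in class $\mathcal{B}$, the Fatou set consists of finitely many attracting basins, so the forward orbit of a small disk $D(z,r)\subset\mathcal{F}(f)$ converges locally uniformly to an attracting cycle; in particular $\{f^n\}$ is uniformly bounded on $D(z,r)$ (after possibly shrinking $r$ so that $D(z,r)$ lies in a single Fatou component and avoids the, necessarily finitely many, poles-free situation is automatic for entire $f$). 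Then the integrand $\frac{|(f^n)'|^2}{(1+|f^n|^2)^2}$ is controlled: one uses the Cauchy estimates on a slightly larger disk together with the uniform bound on $|f^n|$ to conclude $S(f^n,D(z,r))$ stays bounded, hence $\frac1n\log S(f^n,D(z,r))\to 0\neq+\infty$. This uses $E(f)\subset\mathcal{F}(f)$ only implicitly, but it will matter in the other direction.

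For the hard direction, assume $z\in\mathcal{J}(f)$ and fix $r>0$; write $U=D(z,r)$. The goal is a lower bound of the form $S(f^n,U)\geq C\lambda^n$ for some $\lambda>1$ and all large $n$, which immediately gives $\liminf\frac1n\log S(f^n,U)\geq\log\lambda$; to push this to $+\infty$ one applies the estimate not to $f$ but to $f^k$ for arbitrary $k$, obtaining $\liminf\frac1n\log S(f^n,U)\geq\frac1k\log\lambda_k$ where $\lambda_k$ grows with $k$ (this is where hyperbolicity in $\mathcal{B}$ pays off, since a hyperbolic map is expanding on a neighbourhood of its Julia set in the hyperbolic metric of $\mathbb{C}\setminus\mathcal{P}$, and the expansion factor of $f^k$ on $\mathcal{J}(f)$ blows up). Concretely, I would proceed as follows. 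First, $f^m(U)$ eventually covers a fixed spherical disk around some point $w_0\in\mathcal{J}(f)$: by the blowing-up property of the Julia set together with the fact that $\mathcal{J}(f)$ is a perfect set, there is $m_0$ with $f^{m_0}(U)\supset D_{sph}(w_0,\rho_0)$ for some definite $\rho_0>0$, and since $\mathcal{J}(f)$ is forward invariant we may take $w_0$ to lie in $\mathcal{J}(f)$, away from $E(f)$ because $E(f)\subset\mathcal{F}(f)$. Second, restricting attention to the forward orbit of this spherical disk and using that $f$ is hyperbolic, the hyperbolic expansion gives a round hyperbolic (hence, locally, spherical) disk $D_n\subset f^n(U)$ whose spherical radius is bounded below uniformly in $n$, so that $S(f^n,U)=\frac1\pi\,\mathrm{area}_{sph}(f^n(U))\geq$ (number of sheets) $\times$ (area of one such disk). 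Third, I count sheets: because $f$ expands the hyperbolic metric by a factor $\geq\kappa>1$ on a neighbourhood of $\mathcal{J}(f)$, a fixed hyperbolic disk pulls back under $f^{-n}$ to a set that, inside $U$, must be covered by at least $\sim\kappa^{2n}$ disjoint hyperbolically-small disks—equivalently $f^n$ restricted to $U$ has degree growing like $\kappa^{2n}$ onto the fixed target—giving $S(f^n,U)\gtrsim\kappa^{2n}$. Replacing $f$ by $f^k$ replaces $\kappa$ by $\kappa^k$ and hence $\frac1n\log S(f^n,U)\to+\infty$.

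The technically delicate point, and the one I expect to be the main obstacle, is the third step: making the sheet-counting rigorous for a \emph{non-compact} Julia set and in the \emph{spherical} rather than hyperbolic metric. The hyperbolic metric of $\mathbb{C}\setminus\mathcal{P}$ and the spherical metric of $\hat{\mathbb{C}}$ are comparable only on compact subsets of $\mathbb{C}\setminus(\mathcal{P}\cup\{\infty\})$, so one must first confine the relevant part of the dynamics to such a compact set—this is exactly what hyperbolicity in class $\mathcal{B}$ allows, since the Julia set of such an $f$ lies at bounded hyperbolic distance from a compact "core" (the postsingular set is compact and bounded away from $\infty$ in an appropriate sense), and the condition $E(f)\subset\mathcal{F}(f)$ guarantees the target disk around $w_0$ can be chosen in this good region. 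Once everything is localized there, the standard hyperbolic-expansion sheet count applies, and then the comparability of metrics on the core converts the hyperbolic-area lower bound into a spherical-area lower bound, i.e.\ a lower bound for $S(f^n,U)$. I would also need to check that the finitely many "bad" directions (orbits escaping to $\infty$ or into $\mathcal{P}$) contribute only lower-order corrections, which follows since those orbits land in $\mathcal{F}(f)$ and so the portion of $U$ whose orbit behaves badly is, after finitely many steps, disjoint from the Julia set and contributes a bounded amount to $S(f^n,\cdot)$ by the Fatou-side estimate of the first paragraph.
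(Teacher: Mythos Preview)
Your easy direction is fine and essentially matches the paper's Proposition~\ref{Characterization-Fatou-Set} (via Marty's criterion; hyperbolicity is not even needed there).

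The hard direction, however, has a genuine gap in the sheet-counting step. Hyperbolic expansion tells you that each component of $f^{-n}(D)$ has hyperbolic diameter $\lesssim\kappa^{-n}$; it does \emph{not} tell you that $U$ contains $\sim\kappa^{2n}$ such components. A linear map $z\mapsto\lambda z$ with $|\lambda|>1$ expands the hyperbolic metric of the disk, yet $f^{-n}(D)$ has exactly one component. The number of sheets of $f^n|_U$ over a fixed target is a counting statement that requires separate input---something about degree or number of preimages---which pure metric expansion does not supply. Your auxiliary claim that $\mathcal{J}(f)$ ``lies at bounded hyperbolic distance from a compact core'' is also false: $\mathcal{J}(f)$ is unbounded in $\mathbb{C}$, and the hyperbolic density of $\mathbb{C}\setminus\overline{\mathcal{P}(f)}$ decays like $(|z|\log|z|)^{-1}$ near $\infty$, so hyperbolic distance to any compact set is unbounded along $\mathcal{J}(f)$. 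This kills the proposed confinement to a compact region, and with it the hyperbolic/spherical comparability you need.

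The paper's argument is entirely different and does not use hyperbolic expansion at all. It places the exceptional point (or, if $E(f)=\emptyset$, any Fatou point) at the origin and uses hyperbolicity only to bound $M(2r_1,f^n)$ uniformly in $n$ for small $r_1$, since orbits near $0$ converge to an attracting cycle; hence $T(2r_1,f^n)$ is bounded. One then estimates $S(f^n,\{r_1<|z|<2r_2\})$ from below via Nevanlinna theory: Lemmas~\ref{lemma-Nevanlinna-Ahlfors-Shimizu} and~\ref{Modulus-Nevanlinna-Characteristic} give a lower bound of the form $A\,T(2r_2,f^n)-B$, and the Eremenko-point Lemma~\ref{Eremenko-Point} together with Lemma~\ref{compareInequ} yields $T(2r_2,f^n)\gtrsim\log M^n(r_2,f)$, which grows super-exponentially in $n$ for any transcendental $f$. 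Finally, since the annulus $\{r_1<|z|<2r_2\}$ avoids $E(f)$, the blow-up property gives $f^L(D(z,r))\supset\{r_1<|z|<2r_2\}$ for some $L$, and Proposition~\ref{Area-Relation} transfers the estimate back to $D(z,r)$. The growth input comes from $\log M^n(r,f)$, not from any expansion constant on $\mathcal{J}(f)$.
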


We also include the case  $E(f)=\emptyset$  in Theorem \ref{characterization-hyperbolic}.  It can be extended to some larger class of transcendental maps, while we do not pursue the full generality here.

The paper consists of five sections. In section \ref{Preliminaries}, we collect together a number of results that will be used later and introduce some notations and definitions. In section \ref{Section-Theorem-1}, we will prove Theorem \ref{characterization-general}. In section \ref{Section-Theorem-2-3}, the proofs of Theorem \ref{characterization-one} and Theorem \ref{characterization-E(f)-empty} are given. In section \ref{Section-Theorem-4-5}, we prove Theorem \ref{Multiply} and Theorem \ref{characterization-hyperbolic}.

\section{Preliminaries}\label{Preliminaries}

In this section, we first give some  definitions and basic facts. We will give  a brief introduction of Nevanlinna theory (See Hayman's book \cite{Hayman-Book}). Let $f$ be a transcendental entire (meromorphic)  function. Set
$\log^{+}x=\log\max\{1, x\}$.  Define
$$m(r,f):=\frac{1}{2\pi}\int_{0}^{2\pi}\log^{+}|f(re^{i\theta})|d\theta,$$
$$N(r,f):=\int_{0}^{r}\frac{n(t,f)-n(0,f)}{t}dt+n(0,f)\log r,$$
where $n(t,f)$ denotes the number of poles of $f$ counted according to their multiplicities
in $\{z:|z|<t\}$, and
$$T(r,f):=m(r,f)+N(r,f).$$
$N(r,f)$ is known as the integrated counting function of poles of $f$ and $T(r,f)$ as the Nevanlinna's characteristic of $f$. Then $f$ is transcendental if and only if $$\lim_{r\rightarrow+\infty}\frac{T(r,f)}{\log r}=+\infty.$$
The Nevanlinna deficiency is defined as
$$\delta(a,f)=\liminf_{r\rightarrow+\infty}\frac{m(r,\frac{1}{f-a})}{T(r,f)}.$$

Given a domain $U$ in $\mathbb{C}$, recall that
$$S(f, U):=\frac{1}{\pi}\int\int_{U}\frac{|f'(z)|^2}
{(1+|f(z)|^2)^2}dx dy.$$ In order to be compatible with the conventions in Nevanlinna theory, we write $S(r,f)=S(f, D(0,r))$ for simplicity, and we expect this  will bring no confusion. The function
$$T_0(r,f)=\int_0^{r}\frac{S(t,f)}{t}dt$$
is called the Ahlfors-Shimizu's characteristic of $f$.

\begin{lemma}(\cite{Hayman-Book})\label{lemma-Nevanlinna-Ahlfors-Shimizu}
Let $f$ be a transcendental entire function. Then
$$|T_0(r,f)-T(r,f)-\log^{+}|f(0)||\leq \frac{\log 2}{2}.$$
\end{lemma}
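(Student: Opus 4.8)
The plan is to deduce the estimate from the Ahlfors--Shimizu form of the First Main Theorem, which I would in turn obtain from Green's formula (equivalently, the generalized Jensen formula). Put $u(z)=\log(1+|f(z)|^{2})$; since $f$ is a pole‑free entire function, $u\in C^{\infty}(\overline{D(0,r)})$ for every $r>0$, and a direct computation with $\partial\bar\partial$ gives $\Delta u=\frac{4|f'|^{2}}{(1+|f|^{2})^{2}}\ge 0$. Applying the generalized Jensen formula
\[
\frac{1}{2\pi}\int_{0}^{2\pi}u(re^{i\theta})\,d\theta-u(0)=\frac{1}{2\pi}\iint_{D(0,r)}\log\frac{r}{|\zeta|}\,\Delta u(\zeta)\,dA(\zeta),
\]
and then converting the right‑hand side by the layer‑cake identity $\frac{1}{2\pi}\iint_{D(0,r)}\log\frac{r}{|\zeta|}\,d\mu=\frac{1}{2\pi}\int_{0}^{r}\frac{\mu(D(0,t))}{t}\,dt$ with $d\mu=\Delta u\,dA$ (so that $\frac{1}{4\pi}\mu(D(0,t))=S(t,f)$), one arrives at
\[
T_{0}(r,f)=m_{0}(r,f)-\log\sqrt{1+|f(0)|^{2}},\qquad m_{0}(r,f):=\frac{1}{2\pi}\int_{0}^{2\pi}\log\sqrt{1+|f(re^{i\theta})|^{2}}\,d\theta .
\]
(In the general meromorphic case the poles of $f$ contribute point masses to the Riesz measure of $u$ and one picks up an extra $N(r,f)$ on the right; here there are none.)

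The second ingredient is the elementary two‑sided bound
\[
\log^{+}x\ \le\ \log\sqrt{1+x^{2}}\ \le\ \log^{+}x+\tfrac12\log 2\qquad(x\ge 0),
\]
the left half coming from $1+x^{2}\ge\max\{1,x^{2}\}$ and the right half from $1+x^{2}\le 2\max\{1,x^{2}\}$ (sharp at $x=1$). Taking $x=|f(re^{i\theta})|$ and integrating in $\theta$ yields $m(r,f)\le m_{0}(r,f)\le m(r,f)+\tfrac12\log 2$, where $m(r,f)$ is the Nevanlinna proximity function; since $f$ is entire, $N(r,f)=0$ and hence $T(r,f)=m(r,f)$. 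Taking $x=|f(0)|$ yields $\log^{+}|f(0)|\le\log\sqrt{1+|f(0)|^{2}}\le\log^{+}|f(0)|+\tfrac12\log 2$.

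Putting the pieces together, I would write $T_{0}(r,f)-T(r,f)+\log^{+}|f(0)|=(m_{0}(r,f)-m(r,f))-(\log\sqrt{1+|f(0)|^{2}}-\log^{+}|f(0)|)$, a difference of two quantities each lying in $[0,\tfrac12\log 2]$, hence itself lying in $[-\tfrac12\log 2,\tfrac12\log 2]$; this gives $|T_{0}(r,f)-T(r,f)+\log^{+}|f(0)||\le\frac{\log 2}{2}$, which is the assertion (with the value at the origin entering with a $+$ sign). The one point worth flagging is that the two error terms must be kept together: estimating each of them separately would only give an error bound of $\log 2$, whereas they enter with opposite signs and so the true constant is the sharper $\frac{\log 2}{2}$. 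Everything else is routine bookkeeping of the normalizations in the Jensen--Green formula and in the definition of $S(t,f)$, and the result is of course classical (\cite{Hayman-Book}).
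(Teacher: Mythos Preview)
The paper does not supply its own proof of this lemma; it is simply quoted from Hayman's book. Your argument is correct and is precisely the classical derivation: Green--Jensen applied to $\log(1+|f|^{2})$ to obtain $T_{0}(r,f)=m_{0}(r,f)-\log\sqrt{1+|f(0)|^{2}}$, followed by the elementary sandwich $\log^{+}x\le\log\sqrt{1+x^{2}}\le\log^{+}x+\tfrac12\log 2$ applied both to $|f(re^{i\theta})|$ and to $|f(0)|$. Your observation that the two error terms enter with opposite signs, so that the sharp constant is $\tfrac12\log 2$ rather than $\log 2$, is exactly the point.

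You are also right to flag the sign: the correct inequality is $|T_{0}(r,f)-T(r,f)+\log^{+}|f(0)||\le\tfrac12\log 2$, not the version with a minus sign in front of $\log^{+}|f(0)|$ as printed in the paper. Indeed, if one inspects how the paper actually \emph{uses} the lemma (e.g.\ inequality \eqref{inequality-exp-1}), it invokes the bounds $T(r,f)-\tfrac12\log 2-\log^{+}|f(0)|\le T_{0}(r,f)\le T(r,f)+\tfrac12\log 2-\log^{+}|f(0)|$, which is your version; so the discrepancy is only a typo in the statement.
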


\begin{lemma}(\cite{Hayman-Book})\label{Modulus-Nevanlinna-Characteristic}
Let $f$ be a transcendental entire function. Then
$$T(r,f)\leq \log^{+}M(r,f)\leq \frac{R+r}{R-r}T(r,f)$$
for any $r<R$.
\end{lemma}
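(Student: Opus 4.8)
The plan is to prove the two inequalities separately, as they are of completely different natures. The left-hand bound $T(r,f)\le\log^{+}M(r,f)$ is immediate: because $f$ is entire it has no poles, so $N(r,f)\equiv 0$ and hence $T(r,f)=m(r,f)=\frac{1}{2\pi}\int_{0}^{2\pi}\log^{+}|f(re^{i\theta})|\,d\theta$; bounding the integrand pointwise by $\log^{+}M(r,f)=\max_{|z|=r}\log^{+}|f(z)|$ and integrating yields the claim with nothing to spare.

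For the right-hand bound (where, as in Hayman's book, the characteristic on the right is to be read at the outer radius $R$) I would apply the Poisson--Jensen formula on the disk $\{|w|<R\}$. Fixing $z$ with $|z|=r<R$, one writes $\log|f(z)|$ as the sum of a term coming from the zeros $a_k$ of $f$ in $|w|<R$ --- which, with the usual sign convention, is $\le 0$ by the Blaschke estimate $|R(z-a_k)|\le|R^{2}-\overline{a_k}z|$ valid for $|z|,|a_k|<R$ --- plus the Poisson integral of $\log|f|$ over the circle $|w|=R$. Discarding the zero term, then replacing $\log|f(Re^{i\phi})|$ by $\log^{+}|f(Re^{i\phi})|$ (legitimate since the Poisson kernel is nonnegative), and finally bounding the kernel by its maximum $\frac{R^{2}-r^{2}}{(R-r)^{2}}=\frac{R+r}{R-r}$, one gets $\log|f(z)|\le\frac{R+r}{R-r}\,m(R,f)=\frac{R+r}{R-r}\,T(R,f)$, the last equality again because $f$ is entire. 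Taking the supremum over $|z|=r$, and using $T\ge 0$ to pass from $\log M$ to $\log^{+}M$, finishes the proof.

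Since this is a classical estimate, there is no genuine obstacle; the only care needed is bookkeeping: the sign of the Blaschke terms in the Poisson--Jensen formula, and the degenerate cases in which $f$ vanishes on $|w|=R$ or at the chosen point $z$. The latter is handled by the standard limiting convention in the formula, or more simply by first establishing the inequality for all $R$ outside the countable set of moduli of zeros of $f$ and then extending to every $R$ by monotonicity of $T$ and continuity of $M(r,\cdot)$. An alternative route that avoids Poisson--Jensen applies the Borel--Carath\'eodory inequality to a branch of $\log f$ on a slightly larger disk, giving the same estimate up to constants; it is marginally messier, so I would keep the Poisson--Jensen argument.
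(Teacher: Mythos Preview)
Your argument is correct and is exactly the standard proof from Hayman's book; the paper itself does not give a proof but merely cites \cite{Hayman-Book}, so there is nothing further to compare. You are also right to flag that the right-hand side should read $T(R,f)$ rather than $T(r,f)$ --- the paper's statement contains a typo, and indeed all later applications of the lemma in the paper (e.g.\ inequality \eqref{inequality-exp-2} and the estimate for $I_n(r)$ in the proof of Theorem~\ref{characterization-one}) use it in the corrected form $T(R,f)\ge\frac{R-r}{R+r}\log^{+}M(r,f)$.
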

We need to estimate $S(f^n, U)$ carefully  when $n$ varies.  By a direct calculation, we  get the following proposition, which will be  used  directly  in the sequence.
\begin{Prop}\label{Area-Relation}
Let $f$ be a rational function on $\hat{\mathbb{C}}$ or a transcendental entire function on $\mathbb{C}$. Assume that $U$, $V$ be two open sets in $\mathbb{C}$ such that $V\subset f^{k}(U)$ for some $k\in \mathbb{N}$, then
$$S(f^{n+k},U)\geq S(f^n, V)$$ for any
$n\in \mathbb{N}$.
\end{Prop}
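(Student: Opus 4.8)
The plan is to read $S(g,W)$, for a holomorphic map $g$ and an open set $W\subset\mathbb{C}$, as the spherical area of the image $g(W)$ counted with multiplicity. Introduce the counting function $n(g,W,w)=\#\{z\in W:g(z)=w\}$. The density $\frac{|g'(z)|^2}{(1+|g(z)|^2)^2}$ is precisely the pull-back under $g$ of the spherical area element $\frac{du\,dv}{(1+|w|^2)^2}$ (written in the coordinate $w=u+iv$), so the area formula for holomorphic maps gives
\[
S(g,W)=\frac{1}{\pi}\int\!\!\int_{\hat{\mathbb{C}}} n(g,W,w)\,\frac{du\,dv}{(1+|w|^2)^2}.
\]
The only sets requiring care here are the critical values of $g$ and, in the rational case, the point $\infty$, all of which have spherical area zero, so the identity holds as stated.

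Next I would prove the pointwise domination $n(f^{n+k},U,w)\ge n(f^{n},V,w)$ for every $w\in\hat{\mathbb{C}}$. Writing $f^{n+k}=f^{n}\circ f^{k}$ and splitting the fibre over $w$ according to the intermediate value $\zeta=f^{k}(z)$ gives
\[
n(f^{n+k},U,w)=\sum_{\zeta:\,f^{n}(\zeta)=w}\#\{z\in U:\,f^{k}(z)=\zeta\}\;\ge\;\sum_{\substack{\zeta\in V\\ f^{n}(\zeta)=w}}\#\{z\in U:\,f^{k}(z)=\zeta\}.
\]
Since $V\subset f^{k}(U)$, every $\zeta\in V$ has at least one $f^{k}$-preimage in $U$, so each summand on the right is $\ge 1$; hence the right-hand side is at least $\#\{\zeta\in V:f^{n}(\zeta)=w\}=n(f^{n},V,w)$. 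Integrating this inequality against the spherical area element and applying the displayed formula for $S$ twice yields $S(f^{n+k},U)\ge S(f^{n},V)$.

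An equivalent route, closer to the ``direct calculation'' the authors have in mind, is to avoid the counting function and argue by change of variables: by the chain rule the integrand of $S(f^{n+k},U)$ equals the integrand of $S(f^{n},\cdot)$ evaluated at $f^{k}(z)$, times $|(f^{k})'(z)|^{2}$; the substitution $w=f^{k}(z)$, valid off the discrete zero set of $(f^{k})'$, then shows that $S(f^{n+k},U)$ is at least $\frac{1}{\pi}\int\!\!\int_{f^{k}(U)}\frac{|(f^{n})'(w)|^{2}}{(1+|f^{n}(w)|^{2})^{2}}\,du\,dv$, because each point of the open set $f^{k}(U)$ is covered at least once; monotonicity of the integral of a non-negative function together with $V\subset f^{k}(U)$ finishes the argument. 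The only genuinely delicate point in either approach is the justification of the area formula / change of variables for the non-injective holomorphic map $f^{k}$ — namely that branch points and, for rational $f$, the point at infinity contribute nothing — but these form a null set, so I do not expect a real obstacle; the content of the proposition is essentially the monotonicity of pulled-back area under precomposition together with $V\subset f^k(U)$.
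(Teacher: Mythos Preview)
Your proposal is correct, and the second route you outline---chain rule to factor the integrand as $\frac{|(f^{n})'(f^{k}(z))|^{2}}{(1+|f^{n}(f^{k}(z))|^{2})^{2}}\,|(f^{k})'(z)|^{2}$, then change variables $w=f^{k}(z)$ to obtain $S(f^{n},f^{k}(U))$ as a lower bound, then restrict to $V\subset f^{k}(U)$---is exactly the paper's proof. Your first route via the counting function $n(g,W,w)$ and the pointwise inequality $n(f^{n+k},U,w)\ge n(f^{n},V,w)$ is an equivalent reformulation; it makes the geometric meaning (spherical area with multiplicity) explicit, but adds no new ingredient beyond what the change-of-variables argument already uses implicitly.
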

\begin{proof}
The proof  just follows from a direct computation.
\begin{align*}
S(f^{n+k}, U)&=\frac{1}{\pi}\int_{U}\frac{|(f^{n+k})'(z)|^2}{(1+|f^{n+k}(z)|^2)^2}dxdy\\
&=\frac{1}{\pi}\int_{U}\frac{|(f^{n})'(f^{k}(z)|^2|(f^k)'(z)|^2}
{(1+|f^n(f^k (z))|^2)^2}dxdy\\
&\geq \frac{1}{\pi}\int_{f^{k}(U)}\frac{|(f^{n})'(z)|^2}{(1+|f^n(z) |^2)^2}dxdy\\
 &\geq \frac{1}{\pi}\int_{V}\frac{|(f^{n})'(z)|^2}{(1+|f^n(z) |^2)^2}dxdy\\
 &=S(f^n, V).
\end{align*}

\end{proof}

\section{The proof of Theorem \ref{characterization-general}}\label{Section-Theorem-1}
\subsection{Sufficient part of Theorem \ref{characterization-general}}\

In this section, we will prove  the sufficient part of Theorem 1.  The following proposition is established by Marty's Criterion.
\begin{Prop}\label{Characterization-Fatou-Set}
Let $f$ be a rational function on $\mathbb{\hat{C}}$, or a transcendental entire  (meromorphic ) function $f$ on $\mathbb{C}$. $\forall z\in \mathcal{F}(f)$, we have
  $$\limsup_{n\rightarrow+\infty}S(f^n, D(z,r))<+\infty$$ for some $r>0$.
\end{Prop}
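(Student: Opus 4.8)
The plan is to use Marty's criterion, which states that a family $\{g_n\}$ is normal on a domain if and only if the spherical derivatives $g_n^{\#}(z) = \frac{|g_n'(z)|}{1+|g_n(z)|^2}$ are locally uniformly bounded. Fix $z_0 \in \mathcal{F}(f)$. By definition of the Fatou set, the iterates $\{f^n\}$ form a normal family on some neighbourhood of $z_0$; in particular there is a closed disk $\overline{D(z_0,r)}$ contained in that neighbourhood on which, by Marty's criterion, there is a constant $C < +\infty$ with $(f^n)^{\#}(z) \le C$ for all $n$ and all $z \in \overline{D(z_0,r)}$. (One should be a little careful for meromorphic $f$: normality is with respect to the spherical metric, and $f^n$ may have poles in the disk, but the spherical derivative is still continuous and Marty's criterion applies verbatim on $\hat{\mathbb{C}}$.)

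Next I would simply integrate. Observe that the integrand defining $S(f^n, D(z_0,r))$ is exactly the square of the spherical derivative:
\[
S(f^n, D(z_0,r)) = \frac{1}{\pi}\int\int_{D(z_0,r)} \left( (f^n)^{\#}(z) \right)^2 \, dx\, dy \le \frac{1}{\pi} \cdot C^2 \cdot \mathcal{L}^2(D(z_0,r)) = C^2 r^2,
\]
where $\mathcal{L}^2$ denotes planar Lebesgue measure. Since $C$ and $r$ do not depend on $n$, this gives $\limsup_{n\to+\infty} S(f^n, D(z_0,r)) \le C^2 r^2 < +\infty$, which is the desired conclusion (with this particular $r > 0$).

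The argument is short and I do not anticipate a serious obstacle; the only point requiring mild care is the meromorphic case, where one must phrase Marty's criterion on $\hat{\mathbb{C}}$ and note that $S(f^n, U)$ is intrinsically the area (with multiplicity) of the image $f^n(U)$ on the Riemann sphere, so the pole set has measure zero and does not affect the bound. It may also be worth remarking that the quantity $S(f^n, D(z_0,r))$ is monotone in $r$, so boundedness for one admissible $r$ is all that the statement asks; no uniformity in $r$ is claimed or needed here. This proposition is precisely the contrapositive direction needed for the "only if" part of Theorem \ref{characterization-general}: if $z \in \mathcal{F}(f)$ then the $\liminf$ (indeed the $\limsup$) over $n$ is finite for some $r$, hence the divergence condition forces $z \in \mathcal{J}(f)$.
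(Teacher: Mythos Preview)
Your proof is correct and essentially identical to the paper's: both invoke Marty's criterion at a point $z_0\in\mathcal{F}(f)$ to obtain a uniform bound $\Gamma$ on the spherical derivatives over some disk $D(z_0,r)$, and then integrate to conclude $S(f^n,D(z_0,r))\le \Gamma^2 r^2$. The paper's argument is a two-line computation; your additional remarks on the meromorphic case and on monotonicity in $r$ are accurate but not needed for the statement as written.
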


\begin{proof}
For any $z_0\in \mathcal{F}(f)$, by Marty's Criterion, there exists $r>0$ such that
\[\sup_{n}\sup_{z\in D(z_0,r)}\frac{|(f^n)'(z)|}{1+|f^n(z)|^2}\leq \Gamma\]
for some constant $\Gamma$.

Now we obtain
\begin{align*}
S(f^{n}, D(z_0,r))=\frac{1}{\pi}\int\int_{D(z_0,r)}\frac{|(f^{n})'(z)|^2}
{(1+|f^n(z)|^2)^2}dxdy\leq \Gamma^2 r^2.
\end{align*}
\end{proof}
\begin{Remark}
Here we point out the above proof also works for the sufficient part of Theorems 2-4, we omit them in the sequence. We just need to prove the necessary part of these theorems.
\end{Remark}

As we remarked before, we know that Euclidean metric can not be used in the characterization of the Julia set of transcendental functions in general.  This proposition essentially uses the Marty's Criterion, that is  why we  choose spherical metric. Also it is interesting to consider some other metrics to give the characterization of the Julia set.   Let $\rho(z) |dz| $ be a conformal metric on $\mathbb{C}$, one can define the $\rho$-area of $U$ as
$$S_{\rho}(f, U):=\int_{U}\rho^2(f(z)) |f'(z)|^2dxdy$$
for any $f\in \mathcal{H}$, where $\mathcal{H}$ denotes some subclass of rational (meromorphic, entire) functions. We call $\rho(z)|dz|$ is effective for  $\mathcal{H}$, if for any $f\in \mathcal{H}$
$$z\in \mathcal{J}(f)\Longleftrightarrow
  \lim\limits_{n\rightarrow+\infty}S_{\rho}(f^n, D(z,r))=+\infty$$ for any $r>0$.

In the following, we discuss the effectiveness of the Euclidean metric.
\begin{Prop}\label{effective}
Let $\rho(z)=|dz|$ be the Euclidean metric.  We have

(1) $\rho(z)$ is not effective for rational functions;

(2) $\rho(z)$ is effective for $\mathcal{S}$, where
$$ \mathcal{S}=\{f: f \ \text{is a transcendental entire function}, S(f) \ \text{is finite}\}, $$
 and $S(f)$ is the smallest closed set such that
 $f: \mathbb{C}\backslash f^{-1}S(f)\rightarrow \mathbb{C}
 \backslash S(f)$ is a covering map.
\end{Prop}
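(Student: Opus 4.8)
The plan is to treat the two parts separately, since they go in opposite directions. For part (1), the goal is to exhibit a rational map $f$ and a point $z_0 \in \mathcal{F}(f)$ for which $\mathcal{L}^2(f^n, D(z_0,r)) \to +\infty$ nonetheless; the natural candidate is a map with a superattracting or parabolic fixed point at $\infty$, or more simply any rational map possessing a Fatou component $V$ on which the iterates $f^n$ converge locally uniformly to $\infty$ (for instance $f(z) = z^2 + c$ with $c$ in the main cardioid has an attracting fixed point but $\infty$ is superattracting, so the immediate basin of $\infty$ works; even simpler, $f(z)=z^2$ and the component $\{|z|>1\}$). On such a component, pick $z_0$ and a small disk $D(z_0,r) \subset V$; since $f^n \to \infty$ uniformly on $\overline{D(z_0,r)}$, one has $|f^n| \geq R_n$ on the disk with $R_n \to \infty$, while the Euclidean derivative $|(f^n)'|$ grows (this can be quantified, e.g. by the Koebe distortion theorem applied on a slightly larger disk together with the fact that $f^n$ is univalent on a neighborhood of $z_0$ once $n$ is large if $z_0$ is not a critical point, or by a direct computation for $z^2$). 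Then $\mathcal{L}^2(f^n, D(z_0,r)) = \int_{D(z_0,r)} |(f^n)'|^2\,dx\,dy \to +\infty$, so the Euclidean-metric criterion fails on a Fatou point, proving non-effectiveness. For $f(z)=z^2$ one can simply compute: $(f^n)(z) = z^{2^n}$, $(f^n)'(z) = 2^n z^{2^n-1}$, and integrating $|2^n z^{2^n-1}|^2$ over an annular disk inside $\{|z|>1\}$ gives something growing like $4^n$ up to polynomial factors.

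For part (2), the effectiveness of the Euclidean metric on $\mathcal{S}$, the implication $z \in \mathcal{J}(f) \Rightarrow \mathcal{L}^2(f^n,D(z,r)) \to +\infty$ should follow from the necessary part of Theorem \ref{characterization-general} (or its proof) combined with the key structural fact that a transcendental entire function with finite singular set $S(f)$ has no Baker wandering domains, hence no multiply connected Fatou components — indeed functions in $\mathcal{S} \subset \mathcal{B}$ have order of growth well controlled and, more to the point, every Fatou component is simply connected and eventually periodic (no wandering domains, by Eremenko--Lyubich / Goldberg--Keen for class $\mathcal{B}$, and in fact $\mathcal{S}$ is even nicer). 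The converse implication, $z \in \mathcal{F}(f) \Rightarrow \limsup_n \mathcal{L}^2(f^n,D(z,r)) < +\infty$ for some $r$, is where the real content lies: on a Fatou component that is (eventually) a bounded attracting or parabolic basin the Euclidean and spherical metrics are comparable on a neighborhood of the limit cycle, so the spherical estimate of Proposition \ref{Characterization-Fatou-Set} transfers; but on a basin where $f^n \to \infty$ — which for $f \in \mathcal{B}$ cannot happen on a Fatou component since $\infty$ is an essential singularity and $I(f) \subset \mathcal{J}(f)$ — this danger is absent. So the crux is to argue that for $f \in \mathcal{S}$, no Fatou component escapes to infinity, and on each Fatou component the orbit stays in a bounded region where Euclidean area is controlled by a Koebe/Marty-type argument.

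The main obstacle I expect is the bounded-orbit control in part (2): one must show that for $f \in \mathcal{S}$ and $z \in \mathcal{F}(f)$ there is a disk $D(z,r)$ whose forward images $f^n(D(z,r))$ all lie in a fixed bounded set, and that the Euclidean area does not blow up there. The normal-family/Montel argument gives local uniform boundedness of $\{f^n\}$ on a neighborhood of $z$ (since $f^n$ omits the two-point set needed, or rather stays in a compact part of a bounded cycle's basin), and then $\mathcal{L}^2(f^n, D(z,r)) = \int |(f^n)'|^2 \leq (\text{const}) \int \frac{|(f^n)'|^2}{(1+|f^n|^2)^2}$ holds because $|f^n|$ is bounded on the disk, reducing the Euclidean bound to the spherical one from Proposition \ref{Characterization-Fatou-Set}. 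The only genuinely delicate point is excluding Fatou components on which $f^n \to \infty$, and for that I would invoke that $I(f) \subset \mathcal{J}(f)$ for transcendental entire functions together with the absence of Baker domains and multiply connected wandering domains for $f \in \mathcal{S}$ — equivalently, cite the finiteness of $S(f)$ to place $f$ in a class where every Fatou component eventually maps into a bounded periodic cycle. Once that is in place, parts (1) and (2) assemble quickly from the tools already developed.
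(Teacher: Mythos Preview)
Your proposal is correct and follows the paper's approach almost exactly: the paper also uses $f(z)=z^2$ on $\{|z|>1\}$ for part~(1), and for part~(2) cites the classification of Fatou components for $f\in\mathcal{S}$ (eventual landing in an attracting, parabolic, or Siegel cycle) to conclude that orbits in $\mathcal{F}(f)$ stay in a bounded region, then uses the equivalence of the Euclidean and spherical metrics on bounded sets together with Theorem~\ref{characterization-general} (or Proposition~\ref{Characterization-Fatou-Set}). One small simplification you can make: the $z\in\mathcal{J}(f)$ direction in (2) needs no structural input about $\mathcal{S}$ at all, since trivially $S_\rho(f^n,U)\ge \pi\, S(f^n,U)$ and Theorem~\ref{characterization-general} already gives $S(f^n,D(z,r))\to\infty$.
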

\begin{proof}
(1) We consider  the example of $f(z)=z^2$.
For any fixed $z$ such that $|z|>1$, then for any $r<\frac{|z|-1}{2}$, we have
$$\liminf_{n\rightarrow+\infty}S_{\rho}(f^n, D(z,r))=\liminf_{n\rightarrow+\infty}
\int_{D(z,r)}|(f^n)'(z)|^2dxdy=+\infty.$$
This indicates that the Euclidean metric is not effective for rational functions.

(2) For any $z\in \mathcal{J}(f)$, the expansion property near the Julia set implies that
$$\liminf\limits_{n\rightarrow+\infty}S_{\rho}(f^n, D(z,r))=+\infty$$ for any $r>0$.
For any $z\in \mathcal{F}(f)$, by the classification of Fatou components \cite{Sch}, we know that $f^{n}(z)$ will eventually enter into a periodic attracting cycle, a periodic parabolic cycle or a periodic cycle of Siegel disks. There exists $r>0$ such that $\{f^n(D(z,r))\}_{n\in \mathbb{N}}$ will stay in some bounded region. We know that in a bounded region, the spherical metric $\hat{\rho}(z)=\frac{1}{\pi}\frac{|dz|}{1+|z|^2}$ and Euclidean metric $\rho(z)=|dz|$ are equivalent to each other. By Theorem \ref{characterization-general}, we know that $\lim\limits_{n\rightarrow+\infty}S_{\hat{\rho}}(f^n, D(z,r))<+\infty$, which implies that $\lim\limits_{n\rightarrow +\infty}S_{\rho}(f^n, D(z,r))<+\infty.$

\end{proof}

We split the proof of Theorem \ref{characterization-general} into rational case (Section 3.2) and transcendental case (Section 3.3).

\subsection{The first step of Theorem \ref{characterization-general}---rational case}\

We first prove Theorem \ref{characterization-general} for a rational function $f$  with degree $d\geq 2$.
\begin{proof}

 (1)First, we assume that $E(f)=\emptyset$. Since $\mathcal{J}(f)$ is the closure of repelling periodic points,  there exists a repelling periodic point $z_0$ in
$D(z,r)$ with period $m$.  We can find a small neighbourhood $U$ of $z_0$ in $D(z,r)$ such that $U \subset f^m(U)$.
Since $E(f)=\emptyset$, we know that
\[\bigcup_{n=1}^{+\infty}f^{nm}(U)=\hat{\mathbb{C}}.\]
By the compactness argument, we have
\[\bigcup_{n=1}^{k}f^{mn}(U)=\hat{\mathbb{C}}\]
for some $k.$ Thus we get $f^{mk}(U)=\hat{\mathbb{C}},$
which shows that
\[ d^{n}\leq S(f^{n+mk}, D(z,r))\leq  d^{mk+n}.\]

(2)
Secondly, we assume  that $E(f)$ $\neq \emptyset$. It is known that $\sharp E(f)\leq 2$ and $E(f)\subset \mathcal{F}(f)$. Further more each point in $E(f)$ is an attracting periodic point.
We write $E(f)=\{p_1, p_2\}$, where $p_1\neq p_2$. The proof for the case of $E(f)$ consisting of only one point is almost the same. By Proposition \ref{Characterization-Fatou-Set}, we know that there exists $\delta>0$ such that  $\sup\limits_{n}S(f^{n}, D(p_1,\delta))\leq \Gamma$ and  $\sup\limits_{n}S(f^{n}, D(p_2,\delta))\leq \Gamma$
     for some constant $\Gamma>0$. Noting that $f$ is a ramified  covering map with degree $d$, we know that
     \[S(f^n, \hat{\mathbb{C}})= d^n\]
    for any $n\in \mathbb{N}$. Now, we obtain
     \[S\left(f^n, \hat{\mathbb{C}}\backslash (D(p_1,\delta)\cup D(p_2,\delta))\right)\geq  d^n-2\Gamma.\]
    For any $r>0$, by the blowing up property of the Julia set,  there exists $k\in \mathbb{N}$ such that  $\hat{\mathbb{C}}\backslash (D(p_1,\delta)\cup D(p_2,\delta))\subset f^k(D(z_0, r))$. Now by Proposition \ref{Area-Relation}, we get
    \[S(f^{n+k}, D(z_0,r))\geq S\left(f^n, \hat{\mathbb{C}}\backslash (D(p_1,\delta)\cup D(p_2,\delta))\right)\geq  d^n-2\Gamma,\]
    and this implies that
    $$\lim\limits_{n\rightarrow +\infty}\frac{\log S(f^n, D(z_0,r))}{n}=\log d.$$

\end{proof}
Together with Proposition \ref{Characterization-Fatou-Set}, we have proved Theorem \ref{characterization-general} for rational case.

\subsection{The second step of Theorem 1--- transcendental case}\

In this subsection we prove Theorem \ref{characterization-general} for the transcendental case.
We need the following argument coming from Ahlfors covering surface theory.
\begin{lemma}[\cite{Hayman-Book}]\label{Ahlfors-covering}
Let $f$ be a transcendental entire (meromorphic) function on $\mathbb{C}$. Assume $\{K_i\}_{i=1}^{5}$ are five simply  connected Jordan domains such that $\overline{K_i}\cap \overline{K_j}=\emptyset$ for distinct $i$ and $j$. Then there exists some $K_i$ such that $f^{-1}(K_i)$ has infinitely many simply connected and bounded components.
\end{lemma}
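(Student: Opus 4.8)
The plan is to obtain this from Ahlfors' theory of covering surfaces; up to two routine supplements it is precisely the classical five islands theorem, the supplements being a pigeonhole step that fixes the index $i$ and a monotonicity step that upgrades ``at least one island'' to ``infinitely many islands''. Throughout, for $R>0$ regard the restriction $f|_{D(0,R)}$ as a covering surface of $\hat{\mathbb C}$ (the entire case being the special case in which $\infty$ is omitted); write $S(R):=S(f,D(0,R))$ for its mean number of sheets, which is exactly the spherical-area quantity used in this paper, and let $L(R)$ denote the spherical length of the image curve $f(\{|z|=R\})$.

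First I would isolate the two analytic inputs. Since $f$ is transcendental, $T_0(R,f)=\int_0^R\frac{S(t,f)}{t}\,dt$ is unbounded, and because $R\mapsto S(R)$ is non-decreasing this forces $S(R)\to+\infty$ as $R\to+\infty$. Secondly, the length--area estimate of Ahlfors' theory gives $L(R)=o(S(R))$ for $R$ outside an exceptional set of finite logarithmic measure; in particular there is a sequence $R_n\to+\infty$ along which $L(R_n)=o(S(R_n))$, and then automatically $S(R_n)\to+\infty$.

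The core step is Ahlfors' Second Fundamental Theorem in island form, applied to the $q=5$ Jordan domains $K_1,\dots,K_5$, whose closures are pairwise disjoint: there is a constant $h=h(K_1,\dots,K_5)>0$ with
\[
(q-2)\,S(R)\ \le\ \sum_{i=1}^{5}\overline n_1(R,K_i)\ +\ h\,L(R)\qquad(R>0),
\]
where $\overline n_1(R,K_i)$ counts the simply connected islands of $f|_{D(0,R)}$ over $K_i$, i.e.\ the simply connected components $\Omega$ of $f^{-1}(K_i)$ with $\overline\Omega\subset D(0,R)$. Evaluating along $R=R_n$ and absorbing $h\,L(R_n)=o(S(R_n))$ into the left-hand side gives $\sum_{i=1}^{5}\overline n_1(R_n,K_i)\ge 2\,S(R_n)$ for all large $n$, so this sum tends to $+\infty$.

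Finally I would conclude by pigeonhole and monotonicity. As the sum over the five indices tends to infinity, some fixed $K_{i_0}$ satisfies $\overline n_1(R_{n_k},K_{i_0})\to+\infty$ along a subsequence; since $R\mapsto\overline n_1(R,K_{i_0})$ is non-decreasing (a simply connected island contained in $D(0,R)$ is still one in any larger disk) and finite for each $R$, in fact $\overline n_1(R,K_{i_0})\to+\infty$ as $R\to+\infty$. Each such island is a bounded, hence relatively compact, simply connected component of $f^{-1}(K_{i_0})$, and distinct islands are distinct components, so $f^{-1}(K_{i_0})$ has infinitely many simply connected bounded components, as claimed. The point that needs genuine work — and which is the substance of Ahlfors' theorem rather than of the above bookkeeping — is that the \emph{simply connected} island count, rather than the total island count, may be placed on the right-hand side of the Second Fundamental Theorem: one must rule out that all islands over the $K_i$ are multiply connected (or degenerate by meeting $\{|z|=R\}$), and this is exactly where the sharp covering-surface estimates, together with the correct geometric constant $h$, enter; the complete argument is in Hayman's book \cite{Hayman-Book}.
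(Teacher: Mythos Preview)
The paper itself does not prove this lemma: it is simply quoted from \cite{Hayman-Book} as a fact from Ahlfors covering surface theory, so there is no in-paper argument to compare against. Your sketch is the standard route---Ahlfors' Second Fundamental Theorem in island form together with the length--area estimate $L(R)=o(S(R))$ off a set of finite logarithmic measure, followed by pigeonhole and monotonicity in $R$---and is correct in outline; the only slip is that in the island version the constant on the left is $(q-4)$ rather than $(q-2)$, i.e.\ one has $\sum_{i=1}^{5}\overline n_1(R,K_i)\ge (q-4)\,S(R)-h\,L(R)$, but with $q=5$ this still gives $\sum_i\overline n_1(R_n,K_i)\ge S(R_n)-o(S(R_n))\to+\infty$, and the remainder of your argument is unaffected.
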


Now we are in the position to prove Theorem 1 for the transcendental case.
\begin{proof}
For any transcendental entire (meromorphic) function $f$, $\sharp E(f)\leq 2$ follows from Montel's theorem. We choose five Jordan domains $\{D_i\}_{i=1}^{5}$  on $\mathbb{C} $ such that $\overline{D_i}\cap \overline{D_j}\neq \emptyset$. According to Lemma \ref{Ahlfors-covering}, we can choose some $j\in \{1, 2, 3, 4, 5\}$, such that there exist infinitely many simply connected and bounded domains $\{U_k\}_{k=1}^{+\infty}$ over $D_{j}$.  We know that
$E(f)$ intersects with at most two domains of $\{U_k\}_{k=1}^{+\infty}$. We choose the domains that do not intersect with $E(f)$, and we relabel  the domains as $\{U_{k'}\}_{k'=1}^{+\infty}$. For any $z_0\in \mathcal{J}(f)$,  $ r>0$ and  $l>0$, there exists $N=N(z_0, r, l)>0$ such that $\bigcup\limits_{k'=1}^{l}U_{k'}\subset f^n (D(z_0,r))$ for any $n\geq N$. We get
\[S(f^{n+1}, D(z_0,r))\geq l\cdot S(\Id, D_{j}).\]
This implies that  $\lim\limits_{n\rightarrow +\infty}S(f^n, D(z_0,r))=+\infty.$
\end{proof}

\section{Proofs of Theorem \ref{characterization-one} and Theorem \ref{characterization-E(f)-empty}}\label{Section-Theorem-2-3}
\subsection{A motivated example--Exponential map} \

In order to illustrate our method for the general case, we first discuss the example of exponential map.
\begin{Prop}
Let $f(z)=\exp(z)$. Then for any
 $ z\in \mathbb{R}$, we have
\begin{equation}
\lim\limits_{n\rightarrow +\infty}\frac{\log S(f^n, \ D(z,r))}{n}=+\infty
\end{equation}
 for any $r>0$. Specially, $\mathbb{R}\subset \mathcal{J}(f)$.
\end{Prop}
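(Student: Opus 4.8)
The plan is to exploit the extremely rapid growth of iterates of $\exp$ along the real axis to show that the spherical area $S(f^n, D(z,r))$ grows faster than any exponential in $n$. First I would fix $z\in\mathbb{R}$ and $r>0$, and observe that it suffices to produce, for arbitrarily large $l\in\mathbb{N}$, an integer $N=N(z,r,l)$ such that $S(f^n, D(z,r))\ge$ (something comparable to) $l$ times a constant, in a way that forces $\frac{1}{n}\log S(f^n,D(z,r))\to+\infty$; in fact the cleanest route is to get a genuinely super-exponential lower bound directly. The key geometric input is that $\exp$ maps the vertical segment $\{x+iy: y\in(-\pi,\pi)\}$ over a point $x\in\mathbb{R}$ onto the slit disc (or annulus) of radius $e^x$, so that once an iterate lands a long real interval near a huge value, the next application of $\exp$ wraps a full horizontal strip around an enormous circle, and the pullback of a fixed collection of Jordan domains accumulates.

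The main steps, in order: (i) Show that $f^n(z)\to+\infty$ for $z\in\mathbb{R}$ with the standard tower-exponential rate, and more importantly that $f^n(D(z,r))$ eventually contains a real interval of length tending to $\infty$ — indeed for $n$ large $f^n(D(z,r))$ contains an interval $[a_n,b_n]\subset\mathbb{R}$ with $b_n-a_n\to\infty$ and $a_n\to\infty$. This follows because $f^{n}$ is nonconstant analytic, $(f^n)'(z)=f^n(z)\prod_{j=0}^{n-1}f^j(z)\to\infty$ on the real point $z$, so the image of the small disc contains a disc of radius $\to\infty$ about $f^n(z)$. (ii) Fix once and for all five pairwise-disjoint closed Jordan domains $K_1,\dots,K_5$ in $\mathbb{C}$; by Lemma \ref{Ahlfors-covering} some $K_i=:D_j$ has infinitely many bounded simply connected preimage components $U_1,U_2,\dots$ under $f$, and discarding at most two that meet $E(f)=\{0\}$ (here actually $E(\exp)=\emptyset$ since $0$ is omitted but not exceptional — one checks $\sharp E(f)\le 2$ regardless) we keep infinitely many $U_{k'}$. (iii) Because $f^n(D(z,r))$ eventually contains arbitrarily large discs, for each $l$ there is $N$ with $\bigcup_{k'=1}^{l}U_{k'}\subset f^{n}(D(z,r))$ for all $n\ge N$, hence by Proposition \ref{Area-Relation} $S(f^{n+1},D(z,r))\ge l\cdot S(\Id,D_j)$ — this already gives $S(f^n,D(z,r))\to\infty$, i.e. $z\in\mathcal{J}(f)$, reproving $\mathbb{R}\subset\mathcal{J}(f)$. (iv) To upgrade $\to\infty$ to super-exponential growth of $\frac{1}{n}\log S(f^n,D(z,r))$, I would instead count how many full fundamental strips of $\exp$ fit inside $f^{n-1}(D(z,r))$: if $f^{n-1}(D(z,r))$ contains a horizontal rectangle of width $w_{n-1}\to\infty$ and height $2\pi$, then $f$ maps it onto an annulus $\{e^{a}<|w|<e^{a+w_{n-1}}\}$ covered with full solid angle, whose spherical area is close to $1$ but — crucially — applying $\exp$ once more to the part of this annulus lying in, say, $\{|w|>1\}$ and of real part spanning a long interval, we find that $f^n(D(z,r))$ contains a disc of radius comparable to $\exp(e^{a})$, i.e. the ``radius gained'' is itself exponential in the previous radius. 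Iterating, the number of disjoint copies of the fixed domains $U_{k'}$ captured at stage $n$ grows like a tower, giving $\log S(f^n,D(z,r))\gtrsim$ (tower of height $n$), whence $\frac1n\log S(f^n,D(z,r))\to+\infty$.

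The main obstacle I expect is step (iv): turning the qualitative ``image discs grow'' into a quantitative, honestly bookkept super-exponential lower bound for the spherical area, since spherical area saturates at the total area of $\hat{\mathbb{C}}$ for a single sheet and one must therefore argue by counting \emph{sheets} (multiplicity of the covering $f^n: D(z,r)\to\mathbb{C}$ over a fixed target domain) rather than by the size of a single image disc. The natural fix is to choose the target $D_j$ inside $\{|w|>2\}$ where $\exp$ restricted to a strip of height $2\pi$ is a bijection onto a slit annulus, count disjoint preimage strips inside the large rectangle $f^{n-1}(D(z,r))$ — there are $\sim w_{n-1}/(2\pi)$ of them horizontally and the geometry forces $w_{n-1}$ to grow tower-exponentially — and then invoke Proposition \ref{Area-Relation} with $l\sim w_{n-1}$ copies. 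One also has to be slightly careful that $f^{n-1}(D(z,r))$ genuinely contains a wide \emph{rectangle} (not merely a wide disc); this is immediate since a disc of radius $R$ about a real point $>R$ contains the rectangle $[\,a, a+R\,]\times[-R,R]$ for suitable $a$, and $R\to\infty$. I would also remark that this Proposition is the $k=1$ prototype for the general Theorem \ref{characterization-one}, with $d=+\infty$ playing the role of the growth rate, which motivates isolating it first.
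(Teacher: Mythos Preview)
Your route differs substantially from the paper's. The paper does not count sheets or fundamental domains; it sets $g_n(w)=f^n(z_0+w)$ so that $S(f^n,D(z_0,\rho))=S(\rho,g_n)$ and then, passing through $T_0$ and $T$ via Lemmas~\ref{lemma-Nevanlinna-Ahlfors-Shimizu} and~\ref{Modulus-Nevanlinna-Characteristic}, obtains
\[
2\log 2\cdot S(4r,g_n)\ \ge\ \tfrac13\log^{+}M(2r,g_n)-\log^{+}M(r,g_n)-\log 2.
\]
For $z_0>0$ one has $M(\rho,g_n)=\exp^{n}(z_0+\rho)$, so the right-hand side equals $\tfrac13\exp^{n-1}(z_0+2r)-\exp^{n-1}(z_0+r)-\log 2$, a difference of towers that blows up super-exponentially; the case $z_0\le 0$ reduces to this after one iteration. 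This Nevanlinna-theoretic computation is exactly the template reused in the proof of Theorem~\ref{characterization-one}, which is the reason the example is isolated. Your geometric sheet-counting is more elementary and avoids the characteristic-function machinery, but it is tailored to $\exp$ and does not generalise in the same way.

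There is, however, a real slip in your step~(iv). The bounded components $U_{k'}$ of $\exp^{-1}(D_j)$ are the $2\pi i\mathbb{Z}$-translates of a single branch of $\log D_j$: they are stacked \emph{vertically}. A rectangle of height $2\pi$ therefore meets at most two of them no matter how large its width $w_{n-1}$ is, so the count ``$\sim w_{n-1}/(2\pi)$ of them horizontally'' is wrong, and the bound $S(f^{n},D(z,r))\ge l\cdot S(\Id,D_j)$ with $l\sim w_{n-1}$ does not follow from that rectangle. The repair is to use the height instead: your own final observation gives $f^{n-1}(D(z,r))\supset D(x_{n-1},R_{n-1})$ with $x_{n-1}=f^{n-1}(z)\in\mathbb{R}_{+}$, hence a rectangle of height $\sim R_{n-1}$ capturing $\sim R_{n-1}/(2\pi)$ of the $U_{k'}$. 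An easy induction (Koebe while $R_n<\pi$, then $\exp(D(x,R))\supset D(e^{x},\tfrac12 e^{x})$ once $R\ge\pi$) shows $R_n\ge\tfrac12\exp^{n}(z_0)$ eventually, and with that correction your argument closes. A side remark: contrary to your parenthetical claim, $E(\exp)\ne\emptyset$, since the omitted value $0$ has backward orbit $\{0\}$; this does not affect the discarding step.
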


\begin{Remark}
 $\mathcal{J}(\exp)=\mathbb{C}$ was conjectured by Fatou in 1926.  Misiurewicz \cite{Misiurewicz1981} verified it in 1981. Some other proofs can also be found in \cite{Eremenko-Lyubich, Shen-Rempe-Gillen}. The key idea in their proofs can be summarized as two steps.
 \begin{enumerate}
 \item Prove that $\mathbb{R}\subset \mathcal{J}(\exp)$.
 \item Prove that for any $z\in \mathbb{C}$ and any neighborhood of $U$ of $z$, $\exp^n(U)$ will intersect $\mathbb{R}$ for some $n$.
 \end{enumerate}

  Here, we only give a pure computational proof for $\bigcup_{n=1}^{+\infty}\exp^{-n}(\mathbb{R})\subset \mathcal{J}(\exp)$.  It is interesting to ask whether there exists a pure computational proof for $\mathcal{J}(\exp)=\mathbb{C}$.
\end{Remark}
\begin{proof}
First for any $z_0\in \mathbb{R}_{+}=\{z: z>0\}$,
$D(z_0,r)$ lies also in the right half plane for any $r< \frac{1}{4} z_0$. Let $g_n(z)=f^n(z_0+z),$ we get $M(r,g_n)=\exp^n(z_0 +r)$.

Now we have
\begin{align*}
S(g_n, D(0,r))&=\frac{1}{\pi}\int\int_{D(0,r)}\frac{|g_n'(z)|^2}{(1+|g_n(z)|^2)^2}dxdy\\
&=\frac{1}{\pi}\int\int_{D(0,r)}\frac{|(f^n)'(z_0+z)|^2}
{(1+|f^n(z_0+z)|^2)^2}dxdy\\
&=\frac{1}{\pi}\int\int_{D(z_0,r)}\frac{|(f^n)'(z)|^2}
{(1+|f^n(z)|^2)^2}dxdy\\
&=S(f^n, D(z_0,r)).
\end{align*}
On the other hand, one has the following estimation.
\begin{align}
 &2\log 2\times S(g_n, D(0,4r))\notag\\
\geq &\int_{r}^{4r}\frac{S(t,g_n)}{t}dt=T_{0}(4r,g_n)-T_{0}(r,g_n)
\notag\\
\geq & (T(4r,g_n)-\frac{1}{2}\log2-\log^{+}|g_n(0)|)
-(T(r,g_n)+\frac{1}{2}\log2-\log^{+}|g_n(0)|)\label{inequality-exp-1}\\
= &T(4r,g_n)-T(r,g_n)-\log 2\notag\\
\geq & \frac{1}{3}\log^{+}M(2r, g_n)-\log^{+}M(r,g_n)-\log 2\label{inequality-exp-2}.
\end{align}
The  inequality \eqref{inequality-exp-1} above follows from Lemma \ref{lemma-Nevanlinna-Ahlfors-Shimizu} and the inequality
\eqref{inequality-exp-2} deduces from Lemma \ref{Modulus-Nevanlinna-Characteristic}.
 Then we get $$\lim_{n\rightarrow +\infty}\frac{\log S(f^n, D(z_0,r))}{n}=+\infty$$ by standard estimation. For the case that $z_0\leq 0$, we have $f(z_0)=\exp(z_0)>0$. Then for any $r>0$,
 there exists some $r_1>0$ such that $f(D(z_0, r)) \supseteq D(f(z_0), r_1)$. We know that
 $$\lim_{n\rightarrow +\infty}\frac{\log S(f^n, D(f(z_0),r_1))}{n}=+\infty$$ by the discussion above. Hence,
 $S(f^{n+1}, D(z_0,r))\geq S(f^n, D(f(z_0), r_1))$ and
 $$\lim_{n\rightarrow +\infty}\frac{\log S(f^n, D(z_0,r))}{n}=+\infty.$$
By Theorem \ref{characterization-general}, this implies that $\mathbb{R}$ lies in the Julia set of $\exp z.$
\end{proof}

In the following, we establish two lemmas related to the growth of maximal modulus, which are crucial to Theorem \ref{characterization-one} and Theorem \ref{characterization-E(f)-empty}.
\subsection{Two basic Lemmas}
 The following lemma follows a certain Hadamard convexity property.

\begin{lemma}\label{compareInequ}
 Let $f$ be a transcendental entire function. For any $k>0$, there exists $r=r(f)>0$, such that for any $r_2> r_1\geq r$, we have
\[\frac{M(r_2, f)}{M(r_1, f)}\geq \frac{r_2^k}{r_1^k}.\]
\end{lemma}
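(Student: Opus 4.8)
The plan is to reduce the statement to the classical Hadamard convexity of $\log M(r,f)$ as a function of $\log r$ (the three-circles theorem), and then to exploit the monotonicity of chord slopes for convex functions. Set $\phi(t)=\log M(e^{t},f)$ for $t\in\mathbb{R}$; since $f$ is entire, $\phi$ is a (real-valued, finite) convex function on all of $\mathbb{R}$. Writing $t_1=\log r_1$ and $t_2=\log r_2$, the desired inequality $M(r_2,f)/M(r_1,f)\geq (r_2/r_1)^{k}$ is precisely
\[
\frac{\phi(t_2)-\phi(t_1)}{t_2-t_1}\geq k ,
\]
i.e.\ it asks that every chord of $\phi$ over an interval contained in $[\log r,\infty)$ have slope at least $k$. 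So everything reduces to showing that the chord slopes of $\phi$ eventually exceed $k$.

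First I would record that transcendence of $f$ forces $\phi(t)/t\to+\infty$ as $t\to+\infty$: indeed $\log^{+}M(r,f)\geq T(r,f)$ by Lemma \ref{Modulus-Nevanlinna-Characteristic} and $T(r,f)/\log r\to+\infty$ because $f$ is transcendental (alternatively one sees it at once from the power series, since $M(r,f)\geq |a_m|r^{m}$ for every nonzero Taylor coefficient $a_m$ and $m$ can be taken arbitrarily large). Now fix once and for all a base point $t_0$ (say $t_0=0$, so $\phi(t_0)=\log M(1,f)$ is a finite constant) and consider $g(t)=\dfrac{\phi(t)-\phi(t_0)}{t-t_0}$ for $t>t_0$. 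By the three-chords lemma for the convex function $\phi$, $g$ is non-decreasing on $(t_0,\infty)$; and since $\phi(t)/t\to+\infty$ while $\phi(t_0)/(t-t_0)\to 0$ and $t/(t-t_0)\to 1$, we get $g(t)\to+\infty$. Hence there is $T>t_0$ with $g(t)\geq k$ for all $t\geq T$.

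Finally, for any $t_2>t_1\geq T$, apply the three-chords lemma to the three points $t_0<t_1<t_2$ to obtain
\[
\frac{\phi(t_2)-\phi(t_1)}{t_2-t_1}\;\geq\;\frac{\phi(t_1)-\phi(t_0)}{t_1-t_0}\;=\;g(t_1)\;\geq\;k ,
\]
the last inequality because $t_1\geq T$. Setting $r=e^{T}$ and exponentiating gives the claim for all $r_2>r_1\geq r$. The only point requiring a little care — and the place I would be most careful in writing the details — is that the conclusion is quantified over \emph{all} $r_1\geq r$, not merely $r_1=r$; this is exactly why one needs the monotonicity of $g$ (so that $g(t_1)\geq k$ for every $t_1\geq T$) in addition to the single chord-slope comparison. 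Beyond that there is no real obstacle: the argument is a routine application of Hadamard's three-circles theorem together with the transcendence of $f$.
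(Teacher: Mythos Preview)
Your argument is correct and follows essentially the same route as the paper: both use Hadamard convexity of $\log M(r,f)$ in $\log r$, the fact that $\log M(r,f)/\log r\to+\infty$ by transcendence, and the three-chords inequality applied with base point $t_0=\log 1=0$ to push the chord slope above $k$ for all $t_2>t_1\geq T$. The only difference is presentational---you spell out the monotonicity of $g$ and the limit more carefully, whereas the paper condenses this into two displayed lines.
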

\begin{proof}
Noting that $f$ is transcendental, we have
\[\lim_{r\rightarrow +\infty}\frac{\log M(r,f)}{\log r}=+\infty.\]
For any $k>0$, there exists $\hat{r}>0$,
\[\frac{\log M(r,f)-\log M(1,f)}{\log r-\log 1}>k\]
for any $r>\hat{r}.$ Since $\log M(r,f)$ is a convex function with respect to $\log r$, this
implies that
\[\frac{\log M(r_2, f)-\log M(r_1, f)}{\log r_2-\log r_1}\geq \frac{\log M(r,f)-\log M(1,f)}{\log r-\log 1}>k \]
for any $r_2>r_1\geq r.$
\end{proof}

We also need the following lemma, which needs more careful estimations compared to Lemma \ref{compareInequ}. For convenience, we denote $M^2(r,f)=M(M(r,f),f)$ and
$M^n(r,f)=M(M^{n-1}(r,f),f)$ for general $n\in \mathbb{N}$ in the following discussion.

\begin{lemma}
 Let $f$ be a transcendental entire  function $f$. Given two points $z_0, z_1\in \mathbb{C}$ . Then there exists $R(f)>0$, if  $|z_1|> |z_0|\geq R(f)$,  we have
\[\lim_{n\rightarrow+\infty}\frac{1}{n}\log \log \frac{M^{n}( |z_1|, f)}{M^n( |z_0|, f)}=+\infty.\]
\end{lemma}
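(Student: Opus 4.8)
The plan is to iterate the single-step comparison coming from Lemma \ref{compareInequ}. Fix $k>2$ (any $k>1$ would already be enough for the qualitative statement, but it is convenient to take $k$ large), let $r=r(f)$ be the radius provided by Lemma \ref{compareInequ}, and set $R(f)=r(f)$. Assume $|z_1|>|z_0|\geq R(f)$. The base point of the induction is the inequality
\[
\frac{M(|z_1|,f)}{M(|z_0|,f)}\geq \frac{|z_1|^k}{|z_0|^k}>1,
\]
so in particular $M(|z_1|,f)>M(|z_0|,f)\geq M(R(f),f)$. Since $\log M(R(f),f)\geq \log M(r,f)$ will in general already exceed $R(f)$ (using that $f$ is transcendental, one may enlarge $R(f)$ at the outset so that $M(t,f)\geq t$ for $t\geq R(f)$), the two points $M(|z_0|,f)$ and $M(|z_1|,f)$ again satisfy the hypotheses of Lemma \ref{compareInequ}. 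Thus the construction is self-reproducing.

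The key step is to track the ratio $\rho_n:=M^n(|z_1|,f)/M^n(|z_0|,f)$. Writing $a_n=M^n(|z_0|,f)$ and $b_n=M^n(|z_1|,f)$, Lemma \ref{compareInequ} applied to the pair $b_n>a_n\geq R(f)$ gives
\[
\rho_{n+1}=\frac{M(b_n,f)}{M(a_n,f)}\geq\Big(\frac{b_n}{a_n}\Big)^k=\rho_n^{\,k}.
\]
Iterating from $\rho_1\geq (|z_1|/|z_0|)^k>1$ yields $\rho_n\geq \rho_1^{\,k^{n-1}}$, hence
\[
\log\rho_n\geq k^{n-1}\log\rho_1,
\]
and therefore
\[
\frac1n\log\log\frac{M^n(|z_1|,f)}{M^n(|z_0|,f)}=\frac1n\log\log\rho_n\geq \frac1n\big((n-1)\log k+\log\log\rho_1\big)\longrightarrow \log k
\]
as $n\to\infty$. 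Since $k>2$ was arbitrary (and can be taken arbitrarily large), letting $k\to\infty$ shows the limit is $+\infty$, which is the assertion. One should also note $\log\rho_1>0$ so that $\log\log\rho_1$ is a genuine real number; if it happens to be negative or $\rho_1$ is only barely above $1$, one absorbs it since it is a fixed constant and is dominated by the $(n-1)\log k$ term for large $n$.

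I expect the only real subtlety to be the bookkeeping on the radius: one must choose $R(f)$ large enough simultaneously so that (i) Lemma \ref{compareInequ} applies at radius $R(f)$ with the chosen $k$, and (ii) $M(t,f)\geq t$ for all $t\geq R(f)$, so that the iterates $M^n(|z_0|,f)$ stay above $R(f)$ and one may feed them back into the lemma. Both are immediate from $\log M(t,f)/\log t\to\infty$, but they need to be stated before the induction is run. Everything else is the routine telescoping/iteration displayed above. A minor point worth mentioning is that $k$ does not depend on $z_0,z_1$, only on $f$, so the final "$+\infty$" is uniform over all admissible pairs, which is exactly what will be needed in the application to $S(f^n,D(z,r))$.
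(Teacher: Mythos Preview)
Your approach is exactly that of the paper---iterate Lemma~\ref{compareInequ} to get $\rho_{n+1}\geq\rho_n^{\,k}$ and conclude $\liminf_n \tfrac{1}{n}\log\log\rho_n\geq\log k$, then let $k\to\infty$. The computation is correct, but there is a quantifier slip in the final step.

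You fix $k$ at the outset and set $R(f)=r(f)$, where $r(f)$ is the threshold supplied by Lemma~\ref{compareInequ} \emph{for that particular $k$}. With this $R(f)$ you obtain $\liminf_n \tfrac1n\log\log\rho_n\geq\log k$. You then write ``letting $k\to\infty$'', but $R(f)$ was chosen as a function of $k$: for a larger $k'$ the lemma gives a (generally larger) threshold $r'(f)$, and there is no reason your fixed pair $|z_1|>|z_0|\geq R(f)$ satisfies $|z_0|\geq r'(f)$. So as written you have proved $\forall k\,\exists R_k$ rather than the required $\exists R\,\forall k$.

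The fix is exactly what the paper does: choose $R(f)$ once and for all, independent of $k$, merely large enough that $M^n(|z_i|,f)\to\infty$ (you already note $M(t,f)\geq 2t$ eventually, which suffices). Then, given any $k$, pick $L=L(k)$ so large that $M^L(|z_0|,f)\geq r(f,k)$; from step $L$ on your iteration $\rho_{n+1}\geq\rho_n^k$ applies and yields $\rho_{n+L}\geq\rho_L^{\,k^n}$ with $\rho_L>1$. The shift by $L$ disappears in the limit, giving $\liminf\geq\log k$ for this same fixed pair $(z_0,z_1)$; now letting $k\to\infty$ is legitimate. With this adjustment your proof and the paper's coincide.
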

\begin{proof}
Since $f$ is a transcendental entire function, there exists $R>0$, if $|z_1|> |z_0|\geq R$, we have
\[\lim_{n\rightarrow+\infty}M^{n}(|z_i|,f)=+\infty\]
for $i=0, 1.$ For any $k$ in the statement in Lemma \ref{compareInequ},  there exist $r>0$, $L>0$ such that
\[M^L(|z_1|,f)>M^L(|z_0|,f)\geq r.\]
Writting $\frac{M^L(|z_1|, f)}{M^L(|z_0|, f)}=1+\epsilon$ for some $\epsilon>0$.
By Lemma \ref{compareInequ}, we have
\[\frac{M^{L+1}(|z_1|,f)}{M^{L+1}(|z_0|,f)}
=\frac{M(M^L(|z_1|, f),f)}
{M(M^L(|z_0|, f),f)}\geq
\left(\frac{M^{L}(|z_1|,f)}{M^{L}(|z_0|,f)}\right)^k.\]
Similarly,
\[\frac{M^{L+2}(|z_1|,f)}{M^{L+2}(|z_0|,f)}
=\frac{M(M^{L+1}(|z_1|,f),f)}
{M(|f^{L+1}(|z_0|,f)|,f)}\geq
\left(\frac{M^{L+1}(|z_1|,f)}{|M^{L+1}(|z_0|,f)}\right)^k\geq \left(\frac{M^{L}(|z_1|,f)}{M^{L}(|z_0|,f)}\right)^{k^2}.\]
Inductively,  we have
\[\frac{M^{n+L}(|z_1|,f)}{M^{n+L}(|z_0|,f)}\geq \left(\frac{M^{L}(|z_1|,f)}{M^{L}(|z_0|,f)}\right)^{k^n}\geq (1+\epsilon)^{k^n}.\]
Hence,
\[\liminf_{n\rightarrow+\infty}\frac{1}{n}\log \log \frac{M^{n+L}( |z_1|, f)}{M^{n+L}( |z_0|, f)}\geq \lim_{n\rightarrow +\infty}\frac{n}{n+L}\log k=\log k.\]

Noting that $k$ is arbitrary, we get
\[\lim_{n\rightarrow +\infty}\frac{1}{n}\log \log \frac{M( |z_1|, f^{n})}{M( |z_0|, f^{n})}=+\infty.\]
\end{proof}

We are now ready to prove Theorem 2.
\subsection{The proof of Theorem \ref{characterization-one}}\

We now prove the necessary part for Theorem \ref{characterization-one}. The main idea  is to estimate $S(f^n, U)$ by  Nevanlinna theory in terms of Ahlfors-Shimizu's characteristic, where $U$ is a large annuli such that $U\cap E(f)=\emptyset$.

\begin{proof}
For any transcendental entire funciton $g$, we rewrite
\[S(r, g)=\frac{1}{\pi}\int_{0}^{r}\rho d\rho\int_{0}^{2\pi}\frac{|g'(\rho e^{i\theta})|^2}{(1+|g(\rho e^{i\theta})|^2)^2}d\theta\]
for $r>0$.
And also for convenience,  we define
\[S(r_1, r_2, g)=\frac{1}{\pi}\int_{r_1}^{r_2}\rho d\rho\int_{0}^{2\pi}\frac{|g'(\rho e^{i\theta})|^2}{(1+|g(\rho e^{i\theta})|^2)^2}d\theta\]
for $0\leq r_1<r_2$. It is obvious that $S(r_1,r_2, g)=S(r_2,g)-S(r_1,g).$ We denote $D(z, r, R)=\{w:r< |z-w|< R\}$.

For any $1<d_1<\frac{d}{2}$, we can choose $k$ large enough such that
 $\frac{2^k+\frac{6}{5}}{2^k-\frac{6}{5}}<d_1$.

One has the following estimation
\begin{align}
\ & S(\frac{r}{2^{k+1}}, 2^{k-1}r, f^n)=S(2^{k-1}r,f^n)-S(\frac{r}{2^{k+1}},f^n)\notag\\
\geq & \frac{1}{k\log 2}\int_{\frac{r}{2}}^{2^{k-1}r}\frac{S(t)}{t}dt-\frac{1}{k\log 2}\int_{\frac{r}{2^{k+1}}}^{\frac{r}{2}}\frac{S(t)}{t}dt \notag\\
=&\frac{1}{k\log 2}(T_{0}(2^{k-1}r,f^n)-T_{0}(\frac{r}{2},f^n))-
\frac{1}{k\log 2}(T_{0}(\frac{r}{2},f^n)-T_{0}(\frac{r}{2^{k+1}},f^n))
\notag\\
\geq & \frac{1}{k\log 2}\left\{T(2^{k-1}r,f^{n})-T(\frac{r}{2},f^n)-
T(\frac{r}{2},f^n)+T(\frac{r}{2^{k+1}},f^n)\right\}-
\frac{2}{k} \label{inequality-theorem-2-2}\\
\geq &\frac{1}{k\log 2}\left(T(2^{k-1}r,f^n)-2T(\frac{r}{2},f^n)\right)-\frac{2}{k}.\notag
\end{align}
The inequality \eqref{inequality-theorem-2-2} follows by Lemma \ref{lemma-Nevanlinna-Ahlfors-Shimizu}.
Write $I_{n}(r)=S(\frac{r}{2^{2k-1}},2r, f^{n})\times k\log 2$, we have
\begin{align*}
I_{n}(r)&\geq \frac{2^{k-1}-\frac{6}{5}}{2^{k-1}+\frac{6}{5}}\log ^{+}M(\frac{6r}{5},f^n)-2\log^{+}M(\frac{r}{2},f^n)-2
\log 2\\
&\geq \frac{1}{d_1}\log ^{+}M(\frac{6r}{5},f^n)-2\log^{+}M(\frac{r}{2},f^n)-2
\log 2.
\end{align*}

To prove the theorem,  we need the following lemma.
\begin{lemma}[BRS\cite{BRS}]\label{Eremenko-Point}
Let $f$ be a transcendental entire function. Then for any $\epsilon>0$, there exists sufficiently large $r>0$ such that there is a point
\[w\in \{z: r<|z|<(1+\epsilon)r\}\]
with $|f^n(w)|\geq M^n(r,f)$ for any $n\in \mathbb{N}.$

\end{lemma}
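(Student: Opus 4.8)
This is the lemma of Bergweiler--Rippon--Stallard recorded as \cite{BRS}; I sketch how it is proved. The plan is to realise $w$ as a point of an infinite nested intersection of compact sets. Put $r_n=M^n(r,f)$, with $r_0=r$; since $f$ is transcendental, $M(t,f)/t\to\infty$, so for $r$ large enough $(r_n)_{n\ge 0}$ is strictly increasing with $r_n\to\infty$. For $n\ge 0$ set
\[
E_n=\{z:\ r\le|z|\le(1+\epsilon)r\}\,\cap\,\bigcap_{j=1}^{n}(f^{j})^{-1}\bigl(\{u:\ |u|\ge r_j\}\bigr),
\]
a decreasing family of compact sets. If every $E_n$ is nonempty, the finite intersection property produces $w\in\bigcap_{n\ge 0}E_n$, and any such $w$ satisfies $|f^n(w)|\ge r_n=M^n(r,f)$ for all $n$; running the argument with a slightly smaller closed annulus then places $w$ in the open annulus. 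So it suffices to prove $E_n\ne\emptyset$ for each fixed $n$.

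For that I would pull the unbounded region $\{|u|>r_n\}$ back through $f$ a total of $n$ times, keeping track of where the components sit. Two facts drive the argument. First, because $f$ is entire, every connected component of $f^{-1}(\{|u|>\rho\})$ is unbounded: a bounded component would force $|f|$ to exceed, somewhere in its interior, the value $\rho$ that it assumes on all of its boundary, contradicting the maximum modulus principle; moreover $f^{-1}(\{|u|>\rho\})\subseteq\{|z|>s\}$ whenever $M(s,f)=\rho$. Second, since $\max_{|z|=r_j}|f(z)|=r_{j+1}$, there is a point $\zeta_j$ with $|\zeta_j|=r_j$ and $|f(\zeta_j)|=r_{j+1}$, so $\zeta_j$ lies on the boundary of some (necessarily unbounded) component of $f^{-1}(\{|u|>r_{j+1}\})$, which therefore comes arbitrarily close to the circle $|z|=r_j$ from outside. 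Transporting these observations down level by level --- using at each step that an unbounded component of $f^{-1}(\{|u|>r_{j+1}\})$ is mapped by $f$ onto essentially all of $\{|u|>r_{j+1}\}$, hence onto a component of $f^{-1}(\{|u|>r_{j+2}\})$ --- one would arrive at a point $w\in\{r<|z|<(1+\epsilon)r\}$ with $|f^j(w)|\ge r_j$ for $j=1,\dots,n$, that is, $E_n\ne\emptyset$. Transcendence enters decisively: because $\log M(t,f)/\log t\to\infty$, the annuli $\{r_{j-1}<|z|<r_j\}$ are enormously thick on a logarithmic scale, and this is what permits the pulled-back continua to be anchored to the circles $|z|=r_j$ and ultimately to reach down into the prescribed thin annulus near $|z|=r$ rather than escaping to $\infty$ prematurely; one fixes $r$ large enough for all these estimates to hold at once.

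The hard part is exactly this localization --- guaranteeing that the chain of pulled-back unbounded components genuinely meets the thin annulus $\{r<|z|<(1+\epsilon)r\}$, and not merely the region $\{|z|>r\}$. This is not a formality: even the one-step assertion that $\{|z|\le(1+\epsilon)r\}$ contains a point with $|f^n(z)|\ge M^n(r,f)$ is not automatic, since in general $M((1+\epsilon)r,f^n)$ need not dominate $M^n(r,f)$, and the max-point of $f^j$ on a circle need not map to the max-point of $f$ on the image circle. Making the localization rigorous requires a careful winding/argument-principle estimate for $f$ on the relevant annuli, together with the growth bound above and a harmless appeal to Picard's theorem to avoid exceptional values, and this is precisely the content of \cite{BRS}. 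Once $E_n\ne\emptyset$ is available for every $n$, the lemma follows at once from the compactness argument of the first paragraph.
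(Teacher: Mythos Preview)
The paper does not prove this lemma at all: it is quoted verbatim from \cite{BRS}, and the surrounding text only remarks that such a point was first constructed by Eremenko via Wiman--Valiron theory, with \cite{BRS-direct-tract} giving an alternative route through harmonic-measure estimates on direct tracts. So there is no ``paper's own proof'' to compare against beyond these historical pointers.

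Your sketch follows the Rippon--Stallard nested-compact-sets template rather than the Wiman--Valiron argument the paper alludes to. In the Wiman--Valiron approach one works near a point where $|f|$ is close to $M(r,f)$; there $f$ behaves locally like $z\mapsto (z/z_0)^{\nu(r)}f(z_0)$ with $\nu(r)$ the central index, and iterating this local description produces the Eremenko point directly, with the thin-annulus localisation coming for free from the local power-map behaviour. Your outline instead pulls back $\{|u|>r_n\}$ repeatedly and appeals to compactness, which is closer in spirit to the later Rippon--Stallard treatment of fast escaping points. You are candid that the genuine difficulty --- anchoring the pulled-back continua inside the thin annulus $\{r<|z|<(1+\epsilon)r\}$ rather than merely in $\{|z|>r\}$ --- is not carried out, and indeed that step is precisely where either Wiman--Valiron estimates or the annulus-covering arguments of \cite{BRS, RipponStallard-EremenkoPoint} must be invoked. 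As it stands, your write-up is an accurate roadmap with the key technical lemma outsourced, which matches the paper's own stance of citing the result without proof.
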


 The point $w$ in Lemma \ref{Eremenko-Point} is called Eremenko point by Rippon and Stallard \cite{RipponStallard-EremenkoPoint}. It was first constructed by Eremenko \cite{Eremenko} with Wiman-Valiron theory.  The traditional proof of Wiman-Valiron theory is based on power series, which can not be extended to meromorphic functions. Bergweiler, Rippon and Stallard \cite{BRS-direct-tract} established a similar Wiman-Valiron theory for mermorophic functions with direct tracts by some delicate estimations of harmonic measures.

 We continue the proof of Theorem \ref{characterization-one}. By Lemma \ref{Eremenko-Point}, there exists $R_1>0$ such that if $r>R_1$, then there exists a point
\[w\in\{z:\frac{11}{10}r\leq |z|\leq \frac{6}{5}r  \}  \]
with $$|f^{n}(w)|\geq M^{n}(\frac{11}{10}r,f)$$
and $$M(r,f)>100 r$$
 for
all $n\in \mathbb{N}$.

Therefore, we have
\begin{align*}
I_{n}(r)&\geq
\frac{1}{d_1}\log^{+}|f^{n}(w)|-2
\log^{+}M^{n}(\frac{r}{2}, f)-2\log 2 \\
&\geq \frac{1}{d_1}\log M^{n}(\frac{11}{10}r,f)-2
\log M^{n}(\frac{r}{2}, f)-2\log 2\\
&=\frac{1}{d_1}\log \frac{M^{n}(\frac{11}{10}r,f)}
{(M^{n}(\frac{r}{2}, f))^{2d_1}}-2\log 2
\end{align*}
for $r\geq R_1.$ Let $r$ be fixed. By Lemma \ref{compareInequ}, for any $0<\epsilon <\frac{1}{100}$, there exists $N=N(r, \epsilon)$ such that
\[2M^{n}(\frac{r}{2},f)<M^{n}(\frac{1+\epsilon}{2}r,f)\]
for any $n\geq N.$ Now, by the growth assumption \eqref{Growth-Condtion}, we get
\[\left(M^{n}(\frac{r}{2},f)\right)^{2d_1}<M^{n}(\frac{1+\epsilon}{2}r,f).\]

 Since $\sharp E(f)\leq 2$, we have $D(0,\frac{r}{2^{k+1}},2^{k-1}r)\bigcap E(f)=\emptyset$
 for any $r>R_1$. $\forall z\in \mathcal{J}(f),  s>0$,
there exists $L=L(z, r)>0$ such that
\[D(0,\frac{r}{2^{k+1}},2^{k-1}r)\subset f^{L}(D(z,s)).\]

Thus,
\[S(f^{n+L},D(z,s))\geq S(f^n, D(0, \frac{r}{2^{k+1}}, 2^{k-1}r))\]
for all $n\in \mathbb{N}.$ This implies the following inequality
\begin{equation}
S(f^{n+L},D(z,s))
\geq \frac{1}{d_1k\log 2}\log \frac{M^{n}(\frac{11}{10}r,f)}
{M^{n}(\frac{1+\epsilon}{2}r, f)}-\log 2.
\end{equation}
In view of  Lemma \ref{compareInequ}, we obtain
\[\lim_{n \rightarrow +\infty}\frac{\log S(f^n, D(z,s))}{n}=+\infty.\]

\end{proof}
\subsection{Proof of Corollary \ref{characterization-k} }\

Now we prove Corollary \ref{characterization-k} by Theorem \ref{characterization-one}.

\begin{proof}

By Theorem \ref{characterization-one},   $z\in \mathcal{J}(f^k)$ if and only if
\[
\lim_{n\rightarrow +\infty}\frac{1}{n}\log S(f^{kn}, D(z,r))=+\infty
\]
for any $r>0$.
Noting that $\mathcal{J}(f^k)=\mathcal{J}(f)$, for any $z\in \mathcal{J}(f)$, we have $f^{i}(z)\in \mathcal{J}(f^k)$ for $1\leq i\leq k-1$. For any fixed $i$,  there exists $r_i>0$ such that $f^i(D(z,r))\subset D(f^i(z), r_i)$. Thus we have
$$S(f^{kn+i}, D(z,r))\supset S(f^{kn}, D(f^i(z), r_i)).$$
Therefore
$$
\lim_{n\rightarrow+\infty}\frac{\log S(f^{kn+i}, D(z,r))}{n}\geq \lim_{n\rightarrow+\infty}\frac{\log S(f^{kn}, D(f^i(z),r_i))}{n}=+\infty.
$$

\end{proof}

Now, we turn to prove Theorem \ref{characterization-E(f)-empty}.
The idea in Theorem \ref{characterization-E(f)-empty}
follows from the same steps in Theorem
\ref{characterization-one}.  Under the setting
of Theorem \ref{characterization-E(f)-empty},
 the key ingredient
here is to use the blowing up property to cover a
large open disk $U$, and we need to  estimate the lower
bound of $S(f^n, U)$. However, this strategy fails for Theorem
\ref{characterization-one}, and we use blowing up property to cover a large
annulus.
\subsection{Proof of Theorem \ref{characterization-E(f)-empty}}\

\begin{proof}

For any $1<d_1<d$, we can choose $k$ large enough such that
 $\frac{2^k+\frac{6}{5}}{2^k-\frac{6}{5}}<d_1$.

Write $I_{n}(r)=(k+1)\log 2\times S(2^k r, f^{n})$, and we
have the following estimation
\begin{align*}
I_{n}(r)\geq &
T(2^k r, f^{n})-T(\frac{r}{2}, f^{n})-\log 2\\
 \geq & \frac{2^k-\frac{6}{5}}{2^k+\frac{6}{5}}\log^{+}M(\frac{6}{5}r,f^{n})-
\log^{+}M(\frac{r}{2}, f^{n})-\log 2 \\
\geq & \frac{1}{d_1}\log^{+}M(\frac{6}{5}r,f^{n})-
\log^{+}M(\frac{r}{2}, f^{n})-\log 2
\end{align*}
Similarly as before,   there exist $R$ and a point
\[w\in\{z:\frac{11}{10}r\leq |z|\leq \frac{6}{5}r  \}  \]
such that $$|f^{n}(w)|\geq M^{n}(\frac{11}{10}r,f)$$
and $$M(r,f)>100 r$$
for
all $n\in \mathbb{N}$ and $r>R$.

Now let $r$ be fixed,  we have
\begin{equation}\label{inequality-theorem-2-1}
I_{n}(r)\geq
\frac{1}{d_1}\log^{+}|f^{n}(w)|-
\log^{+}M^{n}(\frac{r}{2}, f)-\log 2
\end{equation}

For any $0<\epsilon <\frac{1}{100}$, by Lemma \ref{compareInequ}, there exists $N=N(r, \epsilon)$ such that
\[2M^{n}(\frac{r}{2},f)<M^{n}(\frac{1+\epsilon}{2}r,f)\]
for any  $n\geq N$. Together with growth condition \eqref{Growth-Condtion-2}, we have
\[\left(M^{n}(\frac{r}{2},f)\right)^{d_1}<M^{n}(\frac{1+\epsilon}{2}r,f)\]
for all $n \geq N.$ $\forall z\in \mathcal{J}(f), s>0,$  since $E(f)=\emptyset$, there exists $L=L(z, s, r)>0,$ such that
\[D(0,2^k r)\subset f^{L}(D(z,s )).\]
 Therefore,
\[S(f^{n+L},D(z,s))\geq S(f^n, D(0,2^k r))=\frac{I_n(r)}{(k+1)\log 2}\]
for all $n\in \mathbb{N}.$ Hence, by inequality \eqref{inequality-theorem-2-1}, we deduce that
\[S(f^{n+L},D(z,s))\geq\frac{1}{(k+1)d_1\log 2}\log \frac{M^{n}(\frac{11}{10}r,f)}
{M^{n}(\frac{1+\epsilon}{2}r, f)}-\log 2.
\]
By Lemma \ref{compareInequ}, we obtain
\[\lim_{n \rightarrow\infty}\frac{\log S(f^n, D(z,s))}{n}=+\infty.\]
\end{proof}

\section{Proofs of Theorem   \ref{Multiply} and Theorem \ref{characterization-hyperbolic}}\label{Section-Theorem-4-5}

In the previous discussions, all results are limited to
the case that $\mathcal{J}(f)$ is connected. Now, we establish
some  results for the case $\mathcal{J}(f)$ is not connected.
We have seen that one needs to handle the case $E(f)\neq
\emptyset$ carefully. It is natural to think if we
 restrict some conditions on $E(f)$, we can still get  the
 characterization of Julia set.  We need  a  lemma related
 to multiply connected Fatou components
 and $E(f)$ in the proof of Theorem \ref{Multiply}.

\begin{lemma}\label{Multiply-Exceptional}
Let $f$ be a transcendental entire function such that
$\mathcal{F}(f)$ has one multiply connected Fatou component. Then $E(f)=\emptyset$.
\end{lemma}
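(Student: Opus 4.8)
The plan is to prove that if $f$ is a transcendental entire function with a multiply connected Fatou component $U$, then $E(f)=\emptyset$, by combining two classical facts about multiply connected Fatou components of transcendental entire functions. First I would recall the structure theory due to Baker: a multiply connected Fatou component $U$ is necessarily a wandering domain, it is bounded, and for all sufficiently large $n$ the iterate $f^n(U)$ is a bounded multiply connected domain that surrounds $0$ and whose inner and outer boundary radii both tend to $+\infty$; moreover $f^n\to\infty$ locally uniformly on $U$. In particular, for large $n$ the set $f^n(U)$ contains a round annulus $\{z: r_n<|z|<R_n\}$ with $r_n\to\infty$ and $R_n/r_n\to\infty$.

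The second ingredient is the behaviour of the exceptional set. Recall $E(f)=\{z\in\hat{\mathbb{C}}: \sharp(\mathcal{O}^-(z))<\infty\}$, and by Picard's theorem $\sharp(E(f)\setminus\{\infty\})\le 1$. Suppose for contradiction that $E(f)\ne\emptyset$; since $\infty$ is always (essentially) exceptional-like for an entire function but the relevant finite exceptional value is the one that matters here, say $a\in\mathbb{C}$ is the (unique) finite point with finite backward orbit. The key step is then: since $f^n(U)$ eventually contains arbitrarily large round annuli centered at $0$, and $a$ is a fixed finite point, for $n$ large enough $a\in f^n(U)$; hence $a$ lies in the Fatou set. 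But a finite exceptional value $a$ of a transcendental entire function is an omitted value (its only preimages, if any, form the finite backward orbit), and such a point, being an asymptotic value, is known to be either in the Julia set or else... — more directly, I would argue that $a\in f^n(U)$ forces $U$ (which is $f$-invariant up to forward iteration into a single grand orbit of wandering domains) to meet the backward orbit $\mathcal{O}^-(a)$, so some $f^{-m}(a)$ lies in a Fatou component; iterating forward, $a$ itself lies in a Fatou component, and then the whole finite set $\mathcal{O}^-(a)$ lies in $\mathcal F(f)$. The contradiction I would then extract is that the grand orbit of the wandering domain $U$ is infinite while it would have to be eventually periodic (landing in the cycle of components containing $a$), which is impossible for a wandering domain — alternatively, one uses that $\mathcal J(f)$ has empty interior is false in general, so the cleanest route is: an exceptional value $a\in\mathbb C$ of a transcendental entire function always lies in $\mathcal J(f)$ (this is classical, since $a$ is then an omitted or sparsely-attained asymptotic value and the standard normal families argument at $a$ fails), which contradicts $a\in f^n(U)\subset\mathcal F(f)$.

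So the concrete step sequence is: (1) invoke Baker's theorem that $U$ is a bounded wandering domain with $f^n(U)$ eventually a large annulus around the origin; (2) note any finite exceptional value $a$ is fixed and therefore $a\in f^n(U)$ for large $n$, so $a\in\mathcal F(f)$; (3) recall that a finite exceptional value of a transcendental entire function must lie in $\mathcal J(f)$, giving a contradiction; (4) conclude $E(f)\cap\mathbb C=\emptyset$, and since for entire $f$ the point $\infty$ is not in the domain (and is an omitted value, hence would similarly sit "at the Julia set" boundary) — more precisely, the paper's convention is $E(f)\subset\hat{\mathbb C}$ and $\infty$ is always exceptional-like; but in the statements $E(f)=\emptyset$ is used to mean no finite exceptional value and $\infty$ handled separately, so I would state the conclusion as $E(f)=\emptyset$ matching the paper's usage. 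I expect the main obstacle to be pinning down exactly which form of Baker's structure theorem to cite and making the step "$a\in f^n(U)$ for large $n$" rigorous: one must use that $f^n(U)$ contains round annuli $\{r_n<|z|<R_n\}$ with $r_n\to\infty$, $R_n\to\infty$, together with $r_n$ eventually exceeding $|a|$ while the annulus still surrounds $0$ — this is precisely Baker's result that the annuli $f^n(U)$ engulf any fixed compact set (in fact $\mathbb C\setminus f^n(U)$ has a bounded component and an unbounded component, both shrinking/escaping appropriately), so every fixed point of $\mathbb C$, in particular $a$, eventually lies in $f^n(U)$.

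One technical point worth isolating is step (3): rather than quoting "exceptional values are in the Julia set", I would give the short self-contained argument. If $a\in\mathbb C$ has $\mathcal O^-(a)$ finite, then $\mathcal O^-(a)\cup\{\infty\}$ is a finite set $\supset f^{-1}(\mathcal O^-(a)\cup\{\infty\})$, so $\{f^n\}$ omits on $\mathbb C\setminus(\mathcal O^-(a))$ at least two finite values if $\sharp\mathcal O^-(a)\ge 2$ — but actually the cleanest is: $\mathcal J(f)$ is the closure of the backward orbit of any non-exceptional point, and it is perfect and nonempty; if $a\in\mathcal F(f)$ then a neighbourhood of $a$ is in $\mathcal F(f)$, but that neighbourhood meets $\mathcal J(f)$'s "backward-orbit density" set... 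This is getting delicate, so in the write-up I would simply cite the standard fact (Bergweiler's survey) that for a transcendental entire function every point of $E(f)\cap\mathbb C$ lies in $\mathcal J(f)$, and combine it with the Baker-annulus argument to finish. The whole proof should be short: two citations (Baker's multiply connected structure theorem; exceptional values lie in $\mathcal J(f)$) plus the one-line observation that large annuli around $0$ eventually contain any fixed finite point.
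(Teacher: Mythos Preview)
Your proposal contains two genuine errors that cannot be patched without changing the approach.

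First, step~(2) is incorrect. Baker's structure theorem tells you that for large $n$ the domain $f^n(U)$ contains a round annulus $\{z: r_n<|z|<R_n\}$ with $r_n\to\infty$; the inner boundary escapes to infinity. Consequently any fixed finite point $a$ eventually lies in the \emph{bounded complementary component} of $f^n(U)$, not in $f^n(U)$ itself. The iterated annuli surround $a$; they never contain it. Your sentence ``the annuli $f^n(U)$ engulf any fixed compact set'' has the geometry backwards.

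Second, step~(3) is false as a general statement. A finite exceptional value of a transcendental entire function need not lie in $\mathcal{J}(f)$. Take $f(z)=\lambda e^z$ with $|\lambda|$ small: the value $0$ is omitted (hence exceptional), yet $0$ lies in the basin of the attracting fixed point, so $0\in\mathcal{F}(f)$. No such result appears in Bergweiler's survey; you may be thinking of the rational case, where exceptional points are super-attracting and hence lie in $\mathcal{F}$, again the opposite of what you want.

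The paper argues differently and avoids both issues. It normalises the finite exceptional value to $0$, writes $f(z)=z^k e^{g(z)}$, and uses the Borel--Carath\'eodory inequality to bound $A(r,g)$ from below by a constant multiple of $M(cr,g)$. This yields $\liminf_{r\to\infty}\frac{\log^{+}M(4r,f)}{\log^{+}M(r,f)}>1$, and Zheng's Theorem~A then rules out multiply connected Fatou components, a contradiction. The remark following the proof gives a second route: a multiply connected component forces $\min_{|z|=r_n}|f(z)|\to\infty$ along a sequence $r_n\to\infty$, whence $m(r_n,\tfrac{1}{f-a})\to 0$ and $\delta(a,f)=0$; but a finite exceptional value has $\delta(a,f)=1$.
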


 This lemma should be known, however, we can not find a direct reference. Hence we add a proof here.

\begin{proof}
Assume that $E(f)=\{0\}$, it is known that $f$ can be written as the following form:
\[f(z)=z^k\exp(g(z))\]
for some $k \in \mathbb{N}$ and entire function $g$.

By the  Borel-Caratheodory theorem,  for any transcendental entire function $f$,  we have
\[M(r, f)\leq \frac{2r}{R-r}A(R, f)\]
for $r<R$, where $A(R, f)=\max_{|z|=R}\Re f(z)$.
We consider the following inequality
\begin{align*}
\frac{\log^{+}M(4r,f)}{\log^{+}M(r,f)}&=\frac{k\log 4r+A(4r,g)}{k\log r+A(r,g)}\\
&\geq \frac{k\log 4r+\frac{5}{6}M(\frac{3r}{2},g)}{k\log r+M(r,g)}.
\end{align*}
The last inequality follows by  Lemma \ref{Modulus-Nevanlinna-Characteristic}.
Either $g$ is a polynomial or transcendental entire function, we have
$\frac{\log^{+}M(4r,f)}{\log^{+}M(r,f)}>d>1$ for large $r$.
 This implies that $f$ has mo multiply connected Fatou component, which is a contradiction.

\end{proof}
\begin{Remark}
One can also use Nevanlinna theory to prove this lemma. Any value $a\in E(f)$ must be a Nevanlinna deficient value, that is to say $\delta(a,f)>0$.
Since for any $a\in E(f)$, the pre-image of $a$ must be finite. We have $\delta(a, f)=1$.
Since $f$ has multiply connected Fatou component, we can choose a sequence of $\{r_n\}_{n=1}^{\infty}$ such that $$\lim_{n\rightarrow\infty}\min\limits_{|z|=r_n}|f(z)|=\infty.$$ Now
we have
\[\liminf_{r\rightarrow\infty}m(r, \frac{1}{f-a})=\lim_{r\rightarrow\infty}\int_{0}^{2\pi}\log^{+}
\frac{1}{|f(re^{i\theta})-a|}d\theta=0.\]
This directly implies that $\delta(a, f)=0$, which is a contradiction.
\end{Remark}

Now, we turn to prove Theorem \ref{Multiply}.

{\bf Proof of Theorem \ref{Multiply}}
\begin{proof}
Let $U$ be  a multiply connected Fatou component with eventual connectivity $2$. Denote $U_n=f^n(U).$ There exists $n_0$ such that the connectivity of $U_n$ is equal to $2$ for all $n\geq n_0.$ In this case, Bergweiler, Rippon and Stallard \cite{BRS} have proved that there exists no critical point  in $\bigcup\limits_{n=n_0}^{\infty}U_n.$  This means that $f:U_n\rightarrow U_{n+1}$ is a  finite degree unramified covering map and we assume that the degree is $d_n$.  There exist a sequence of geometric annulus $\{A_{r_n}\}_{n=n_0}^{\infty}$ and maps $\{h_n\}_{n=n_0}^{\infty}$ and $\{g_n\}_{n=n_0}^{\infty}$ such that the following diagram commutes
\[\begin{array}{cccccc}
U_{n} &
\stackrel{f}{\longrightarrow} &
U_{n+1} & \stackrel{f}{\longrightarrow} &
U_{n+2} & {\longrightarrow\cdots}\\
\Big\downarrow \vcenter{%
\rlap{$\scriptstyle{h_{n}}$}} & &
\Big\downarrow\vcenter{%
\rlap{$\scriptstyle{h_{n+1}}$}}& &
\Big\downarrow \vcenter{%
\rlap{$\scriptstyle{h_{n+2}}$}} \\
A_{r_n} & \stackrel{g_n}{\longrightarrow} &
A_{r_{n+1}} & \stackrel{g_{n+1}}{\longrightarrow} &
A_{r_{n+2}}& {\longrightarrow\cdots} .\\
\end{array}\]

In the diagram,  $h_{n}$ is the bi-holomorphic map from $U_{n}$ to $A_{r_n}$, where $$A_{r_n}=\{z: 1\leq |z| \leq \exp(2 \pi \Mod(U_n))\}.$$

And also $g_n$ is a ramified covering map with degree $d_n$. By Schwarz reflection principle, we know that $g_n(z)=a_nz^{d_n}$ for some $a_n\neq 0$. This means that the image of any circle $\{z: |z|=m_n\}$ for some $m_n$ in $A_{r_n}$ is always a geometric circle in $A_{r_{n+1}}$.

One can choose an analytic Jordan curve $\gamma_n\subset h_n^{-1}(A_{r_n})=U_n$ which is not homotopic to $0$.
Let $\gamma_{n+1}=f(\gamma_n).$  We know that $\gamma_{n+1}$ is a analytic Jordan curve.
It is known that
$f:\gamma_n\rightarrow \gamma_{n+1}$ is a covering map with degree $d_n$. Since $f$ is an entire function,
$f:\intt \gamma_n\rightarrow \intt \gamma_{n+1}$ is a ramified covering map with the same degree $d_n$.  Furthermore, we get $\lim\limits_{n\rightarrow\infty}d_n=\infty$ by argument principle.

By Lemma \ref{Multiply-Exceptional}, $E(f)=\emptyset$. For any $z\in \mathcal{J}(f)$ and any $r>0$, there exists $L>0$, such that $\intt\gamma_{n_0}\subset f^{L}(D(z,r))$. Noting that $f:\intt \gamma_{n_0}\rightarrow \intt \gamma_{n_0+1}$ is a ramified covering map with degree $d_{n_0}$, one gets
$S(f, \intt\gamma_{n_0})=d_{n_0}\cdot S(\Id, \intt\gamma_{n_0+1})$. Therefore,
\[S(f^{L+1}, D(z,r))\geq S(f, f^{L}(D(z,r)))\geq S(f, \intt\gamma_{n_0})=d_{n_0}\cdot S(\Id, \intt\gamma_{n_0+1}).\]
Similarly, we obtain
$$S(f^{n+L}, D(z,r))\geq d_{n_0}d_{n_0+1}\dots d_{n_0+n-1}\cdot S(\Id, \intt \gamma_{n_0}).$$

This indicates
\[
\liminf_{n\rightarrow+\infty}\frac{\log S(f^{n+L}, D(z,r))}{n}\geq
\liminf_{n\rightarrow+\infty}\frac{\log S(\Id, \intt\gamma_{n_0})+\sum_{j=0}^{n-1}\log d_{n_0+j}}{n}=+\infty.
\]

\end{proof}
{\bf Proof of Theorem \ref{characterization-hyperbolic}}
\begin{proof}
First, we assume that $E(f)\neq \emptyset$ and $0\in E(f)$.  Choose small $r_1>0$, such that $D(0,2r_1) $ still lies in some  Fatou component. We know that $f^n(D(0, 2r_1))$ converges to one attracting periodic orbit when $n$ goes to infinity. This implies that there exists some constant $\Gamma>0$ such that $\sup\limits_{n}\log^{+} M(2r_1, f^n)\leq \Gamma$. This leads to
\begin{equation}\label{hyperbolic-attracting}
\sup\limits_{n} T(2r_1, f^n)\leq \Gamma
\end{equation}
by Lemma \ref{Modulus-Nevanlinna-Characteristic}.

By Lemma \ref{compareInequ}, there exists $R>0$, such that
\[\lim_{n\rightarrow+\infty}\frac{1}{n}\log \log M^n(r_2,f)=+\infty\]
for $r_2>R$. Now let $r_1, r_2$ be fixed and $r_2>100 r_1$, then we have

\begin{align*}
\ & S(r_1, 2r_2, f^n)=S(2r_2,f^n)-S(r_1,f^n)\\
\geq & \frac{1}{\log \
\frac{2r_2}{r_1}}\int_{r_1}^{2r_2}\frac{S(t)}{t}dt-\frac{1}{\log 2}\int_{r_1}^{2r_1}\frac{S(t)}{t}dt\\
=& \frac{1}{\log \frac{2r_2}{r_1}}(T_{0}(2r_2,f^n)-T_{0}(r_1,f^n))-
\frac{1}{\log 2}(T_{0}(2r_1,f^n)-T_{0}(r_1,f^n))
\\
\geq & \frac{1}{\log \frac{2r_2}{r_1} }(T(2r_2,f^n)-T(r_1,f^n))-\frac{1}{\log 2}(T(2r_1,f^n)-T(r_1,f^n))-1-\frac{\log 2}{\log \frac{2r_2}{r_1} }\\
\geq & \frac{1}{\log \frac{2r_2}{r_1} }T(2r_2,f^n)-B
\end{align*}
for some constant $B$ depending on $r_1$ and $r_2$ and $\Gamma$.
The last inequality follows from \eqref{hyperbolic-attracting}.
Furthermore, by the similar technique in Theorem \ref{characterization-one}, we know that there exists some constant $A$, such that
\[S(r_1, 2r_2, f^n)\geq  A\log^{+}M^n(r_2,f)-B\]
by Lemma \ref{Modulus-Nevanlinna-Characteristic} and Lemma \ref{Eremenko-Point}.
Hence we get
\[\lim_{n\rightarrow+\infty}\frac{\log S(r_1, 2r_2, f^n) }{n}=+\infty.\]

Now for any $ z\in \mathcal{J}(f)$ and any $r>0$,
there exists $L=L(z,r, r_1, r_2)>0,$ such that
\[D(0, r_1, 2r_2)\subset f^{L}(D(z,r)).\]

Now we have
\[\liminf_{n\rightarrow +\infty}\frac{\log S(D(z,r), f^{n+L}) }{n}\geq \liminf_{n\rightarrow+\infty}\frac{S(f^n, D(0, r_1, 2r_2))}{n}=+\infty.\]

For the case $E(f)=\emptyset$, we can assume that $0$ lies in some Fatou component. The proof is the same.

\end{proof}
\section*{Acknowledgments} We would like to thank  professors Jianhua Zheng and Guangyuan Zhang for enjoyable  discussions  on Ahlfors covering surface theory and Nevanlinna theory. We also thank professor Guizhen Cui who draws our attention to other metrics in the characterization.  Last, we thank Yinying Kong and Fujie Chai's great help during the preparation of this manuscript.

\bibliographystyle{amsplain}

\begin{thebibliography}{10}
\bibitem{Ahlfors-conformal-invariant}
Ahlfors L V. Conformal invariants: topics in geometric function theory[M]. American Mathematical Soc., 2010.
\bibitem{Beardon-Minda}
Beardon A F, Minda D. The hyperbolic metric and geometric function theory[J]. Quasiconformal mappings and their applications, 2007, 956.



\bibitem{Beardon}
Beardon A F. Iteration of rational functions: Complex analytic dynamical systems[M]. Springer Science and Business Media, 2000.


\bibitem{Bergweiler-Survey}
Bergweiler, W.  Iteration of meromorphic functions[J].
Bulletin of the American Mathematical Society,  1993.
\bibitem{BRS-direct-tract}
Bergweiler W, Rippon P J, Stallard G M. Dynamics of meromorphic functions with direct or logarithmic singularities[J]. Proceedings of the London Mathematical Society, 2008.
\bibitem{Milnor-2006Book}
Milnor J W. Dynamics in one complex variable[M]. Princeton: Princeton University Press, 2006.




\bibitem{BRS}
Bergweiler W, Rippon P J, Stallard G M. Multiply connected wandering domains of entire functions[J]. Proceedings of the London Mathematical Society, 2013: pdt010.



\bibitem{Eremenko}
Eremenko A E. On the iteration of entire functions[J]. Banach Center Publications, 1989, 1(23): 339-345.

\bibitem{Eremenko-Lyubich}
Eremenko A, Lyubich M Y. Dynamical properties of some classes of entire functions[C]. Annales de l'institut Fourier. 1992, 42(4): 989-1020.
\bibitem{Eremenko-Lyubich-Survey}
\newblock Eremenko A E, Lyubich M Y.
\newblock The dynamics of analytic transformations[J]. Leningrad Math. J, 1990, 1(3): 563-634.
\bibitem{Fatou-entire}
Fatou P. Sur l'iteration des fonctions transcendantes entieres[J]. Acta mathematica, 1926, 47(4): 337-370.

\bibitem{Fatou-Rational}
Fatou P. Sur les ¨¦quations fonctionnelles[J]. Bulletin de la soci¨¦t¨¦ math¨¦matique de France, 1920, 48: 33-94.

\bibitem{Hayman-Book}
Hayman W K. Meromorphic functions[M]. Clarendon Press: Oxford, 1964.

\bibitem{Julia-Rational}
Julia G. M¨¦moire sur l'it¨¦ration des fonctions rationnelles[J]. Journal de math¨¦matiques pures et appliqu¨¦es, 1918: 47-246.
\bibitem{Kisaka-Shishikura}
Kisaka M, Shishikura M. On multiply connected wandering domains of entire functions[J]. Transcendental dynamics and complex analysis, 2008, 348: 217-250.


\bibitem{Mcmullen-Renomalization}
McMullen C T. Complex dynamics and renormalization[M]. Princeton University Press, 1994.

\bibitem{Misiurewicz1981}
Misiurewicz M. On iterates of $\exp(z)$ [J]. Ergodic Theory and Dynamical Systems, 1981, 1(01): 103-106.
\bibitem{Rempe-Sixsmith}
Rempe-Gillen L, Sixsmith D. Hyperbolic entire functions and the Eremenko-Lyubich class: Class $\mathcal {B} $ or not class $\mathcal {B} $ ? [J]. arXiv preprint arXiv:1502.00492, 2015.
\bibitem{RipponStallard-EremenkoPoint}
Rippon P J, Stallard G M. Fast escaping points of entire functions[J]. Proceedings of the London Mathematical Society, 2012: pds001.

\bibitem{Sch}
Schleicher D. Dynamics of entire functions[M]. Holomorphic dynamical systems. Springer Berlin Heidelberg, 2010: 295-339.
\bibitem{Shen-Rempe-Gillen}
Shen Z, Rempe-Gillen L. The exponential map is chaotic: An invitation to transcendental dynamics[J]. arXiv preprint arXiv:1408.1129, 2014.

\bibitem{SunYang2000}
Sun D, Yang L. Quasirational dynamic system[J]. Chinese Science Bulletin, 2000, 45(14): 1277-1279.
\bibitem{Stallard-hyperbolic}
Stallard G M. Dimensions of Julia sets of hyperbolic entire functions[J]. Bulletin of the London Mathematical Society, 2004, 36(2): 263-270.
\bibitem{Zheng2006}
Zheng J H. On multiply-connected Fatou components in iteration of meromorphic functions[J]. Journal of mathematical analysis and applications, 2006, 313(1): 24-37.
\end{thebibliography}

\end{document}